\title{log-Coulomb gases in the projective line of a $p$-field}
\author{Joe Webster}
\date{\today}
\newcommand\R{\mathbb{R}}
\newcommand\N{\mathbb{N}}
\newcommand\Z{\mathbb{Z}}
\newcommand\C{\mathbb{C}}
\newcommand\F{\mathbb{F}}
\renewcommand\P{\mathbb{P}}
\newcommand\spl{{\bm{\pitchfork}}}
\newcommand\ptn{{\pitchfork}}
\DeclareMathOperator\re{Re}
\theoremstyle{definition}
\newtheorem{theorem}{Theorem}[section]
\newtheorem{proposition}[theorem]{Proposition}
\newtheorem{lemma}[theorem]{Lemma}
\newtheorem{definition}[theorem]{Definition}
\newtheorem{example}[theorem]{Example}
\newtheorem{notation}[theorem]{Notation}
\numberwithin{equation}{subsection}
\begin{document}

\maketitle

\begin{abstract}
This article extends recent results on log-Coulomb gases in a $p$-field $K$ (i.e., a nonarchimedean local field) to those in its projective line $\P^1(K)$, where the latter is endowed with the $PGL_2$-invariant Borel probability measure and spherical metric. Our first main result is an explicit combinatorial formula for the canonical partition function of log-Coulomb gases in $\P^1(K)$ with arbitrary charge values. Our second main result is called the ``$(q+1)$th Power Law", which relates the grand canonical partition functions for one-component gases in $\P^1(K)$ (where all particles have charge 1) to those in the open and closed unit balls of $K$ in a simple way. The final result is a quadratic recurrence for the canonical partition functions for one-component gases in both unit balls of $K$ and in $\P^1(K)$. In addition to efficient computation of the canonical partition functions, the recurrence provides their ``$q\to 1$" limits and ``$q\mapsto q^{-1}$" functional equations.
\end{abstract}

\let\thefootnote\relax\footnotetext{\noindent\emph{Keywords}: nonarchimedean local field, projective line, log-Coulomb gas, canonical partition function\\
\emph{Mathematics subject classification 2020}: 05A18, 11S40, 12J25, 32A99, 82D05}

\tableofcontents

\newpage
\section{Introduction}
\subsection{Canonical partition functions for log-Coulomb gases}\label{1_1}
Let $X$ be a topological space endowed with a metric $d$ and a finite positive Borel measure $\mu$ satisfying $\mu^N\{(x_1,\dots,x_N)\in X^N:x_i=x_j\text{ for some $i\neq j$}\}=0$ for every $N\geq 1$. A \emph{log-Coulomb gas} in $X$ is a statistical model described as follows: Consider $N\geq 1$ particles with fixed charge values $\mathfrak{q}_1,\dots,\mathfrak{q}_N\in\R$ and corresponding variable locations $x_1,\dots,x_N\in X$. Whether the charge values are distinct or not, we assume particles are distinguished by the labels $1,2,\dots,N$, so that unique configurations of the system correspond to unique tuples $(x_1,\dots,x_N)\in X^N$. Each tuple is called a \emph{microstate} and has an \emph{energy} defined by $$E(x_1,\dots,x_N):=\begin{cases}-\sum_{1\leq i<j\leq N}\mathfrak{q}_i\mathfrak{q_j}\log d(x_i,x_j)&\text{if $x_i\neq x_j$ for all $i<j$},\\\infty&\text{otherwise}.\end{cases}$$ Note that $E^{-1}(\infty)$ has measure 0 in $X^N$ by our choice of $\mu$, and that $E$ is identically zero if $N=1$. We assume the system is in thermal equilibrium with a heat reservoir at some \emph{inverse temperature} $\beta>0$, so that its microstates form a canonical ensemble distributed according to the density $e^{-\beta E(x_1,\dots,x_N)}$. The \emph{canonical partition function} $\beta\mapsto Z_N(X,\beta)$ is defined as the total mass of this density, namely
\begin{equation}\label{canonical}
Z_N(X,\beta):=\int_{X^N}e^{-\beta E(x_1,\dots,x_N)}\,d\mu^N=\int_{X^N}\prod_{1\leq i<j\leq N}d(x_i,x_j)^{\mathfrak{q}_i\mathfrak{q}_j\beta}\,d\mu^N~.
\end{equation}
Given $(X,d,\mu)$ and $\mathfrak{q}_1,\dots,\mathfrak{q}_N$, the explicit formula for $Z_N(X,\beta)$ is of primary interest, as it yields fundamental relationships between the observable parameters of the system and its temperature. For instance, the system's dimensionless free energy, mean energy, and energy fluctuation (variance) are respectively given by $-\log Z_N(X,\beta)$, $-\partial/\partial\beta\log Z_N(X,\beta)$, and $\partial^2/\partial\beta^2\log Z_N(X,\beta)$, all of which are functions of $\beta$ (and hence of temperature). Below are two examples in which the formula for $Z_N(X,\beta)$ is known.

\begin{example}\label{arch}
If $X=\R$ with the standard metric $d$, the standard Gaussian measure $\mu$, and charge values $\mathfrak{q}_1=\dots=\mathfrak{q}_N=1$, then \emph{Mehta's integral formula} \cite{FW} states that $Z_N(\R,\beta)$ converges absolutely for all complex $\beta$ with $\re(\beta)>-2/N$, and in this case $$Z_N(\R,\beta)=\prod_{j=1}^N\frac{\Gamma(1+j\beta/2)}{\Gamma(1+\beta/2)}~.$$ At the special values $\beta\in\{1,2,4\}$, the probability density $\frac{1}{Z_N(\R,\beta)}e^{-\beta E(x_1,\dots,x_N)}$ coincides with the joint density of the $N$ eigenvalues $x_1,\dots,x_N$ (counted with multiplicity) of the $N\times N$ Gaussian orthogonal $(\beta=1)$, unitary $(\beta=2)$, and symplectic $(\beta=4)$ matrix ensembles.
\end{example}

\begin{example}\label{Z_3}
If $X=\Z_p$ with $d(x,y)=|x-y|_p$, the Haar probability measure $\mu$, and $N=3$ charges with $\mathfrak{q}_1=1$, $\mathfrak{q}_2=2$, and $\mathfrak{q}_3=3$, then a general theorem in \cite{Web20} implies that $Z_3(\Z_p,\beta)$ converges absolutely for all complex $\beta$ with $\re(\beta)>-1/6$, and in this case $$Z_3(\Z_p,\beta)=\frac{p^{11\beta}}{p^{2+11\beta}-1}\cdot\left((p-1)(p-2)+(p-1)^2\left[\frac{1}{p^{1+2\beta}-1}+\frac{1}{p^{1+3\beta}-1}+\frac{1}{p^{1+6\beta}-1}\right]\right)~.$$
\end{example}

Note that $\beta\mapsto Z_N(X,\beta)$ always extends to a complex domain containing the line $\re(\beta)=0$. To simultaneously treat all possible choices of $\mathfrak{q}_i$, we extend further to a subset of $\C^{N(N-1)/2}$ as follows: 

\begin{definition}\label{gen_Z}
For any $N\geq 0$ and $(X,d,\mu)$ as above, we write $\bm{s}$ for a complex tuple $(s_{ij})_{1\leq i<j\leq N}$ (the empty tuple if $N=0$ or $N=1$) and define $\mathcal{Z}_0(X,\bm{s}):=1$ and $$\mathcal{Z}_N(X,\bm{s}):=\int_{X^N}\prod_{1\leq i<j\leq N}d(x_i,x_j)^{s_{ij}}\,d\mu^N\qquad\text{for $N\geq 1$.}$$
\end{definition}

Once the formula and domain for $\bm{s}\mapsto\mathcal{Z}_N(X,\bm{s})$ are known, then for any choice of $\mathfrak{q}_1,\dots,\mathfrak{q}_N\in\R$, the formula and domain for $\beta\mapsto Z_N(X,\beta)$ follow by specializing $\mathcal{Z}_N(X,\bm{s})$ to ${s_{ij}=\mathfrak{q}_i\mathfrak{q}_j\beta}$. Thus, given $(X,d,\mu)$, the main problem is to determine the formula and domain of $\bm{s}\mapsto\mathcal{Z}_N(X,\bm{s})$.

\subsection{$p$-fields, projective lines, and splitting chains}\label{1_2}
Of our two main goals in this paper, the first is to determine the explicit formula and domain of $\bm{s}\mapsto\mathcal{Z}_N(\P^1(K),\bm{s})$, where $K$ is a $p$-field and its projective line $\P^1(K)$ is endowed with a natural metric and measure. To make this precise, we briefly recall well-known properties of $p$-fields (see \cite{Weil}, for instance) and establish some notation. Fix a $p$-field $K$, write $|\cdot|$ for its canonical absolute value, write $d$ for the associated metric (i.e., $d(x,y):=|x-y|$), and define $$R:=\{x\in K:|x|\leq 1\}\qquad\text{and}\qquad P:=\{x\in K:|x|<1\}~.$$ The closed unit ball $R$ is the maximal compact subring of $K$, the open unit ball $P$ is the unique maximal ideal in $R$, and the group of units is $R^\times=R\setminus P=\{x\in K:|x|=1\}$. The residue field $R/P$ is isomorphic to $\F_q$ for some prime power $q$, and there is a canonical isomorphism of the cyclic group $(R/P)^\times$ onto the group of $(q-1)$th roots of unity in $K$. Fixing a primitive such root $\xi\in K$ and sending $P\mapsto 0$ extends the isomorphism $(R/P)^\times\to\{1,\xi,\dots,\xi^{q-2}\}$ to a bijection $R/P\to\{0,1,\xi,\dots,\xi^{q-2}\}$ with inverse $y\mapsto y+P$. Therefore $\{0,1,\xi,\dots,\xi^{q-2}\}$ is a complete set of representatives for the cosets of $P\subset R$, i.e.,
\begin{equation}\label{R_decomp}
R=P\sqcup\underbrace{(1+P)\sqcup(\xi+P)\sqcup\dots\sqcup(\xi^{q-2}+P)}_{R^\times}~.
\end{equation}
Fix a uniformizer $\pi\in K$ (any element satisfying $P=\pi R$) and let $\mu$ be the unique additive Haar measure on $K$ satisfying $\mu(R)=1$. The open balls in $K$ are precisely the sets of the form $y+\pi^vR$ with $y\in K$ and $v\in\Z$, and every such ball is compact with measure equal to its radius, i.e., $\mu(y+\pi^vR)=|\pi^v|=q^{-v}$. In particular, each of the $q$ cosets of $P$ in \eqref{R_decomp} is a compact open ball with measure (and radius) $q^{-1}$, and two elements $x,y\in R$ satisfy $|x-y|=1$ if and only if they belong to different cosets. Henceforth, we reserve the symbols $K$, $|\cdot|$, $d$, $R$, $P$, $q$, $\xi$, $\pi$, and $\mu$ for the items above, and we distinguish $|\cdot|$ from the standard absolute value on $\C$ by writing $|\cdot|_\C$ for the latter.\\

We now recall some useful facts from \cite{FiliPetsche} in our present notation. The projective line of $K$ is the quotient space $\P^1(K)=(K^2\setminus\{(0,0)\})/\sim$, where $(x_0,x_1)\sim(y_0,y_1)$ if and only if $y_0=\lambda x_0$ and $y_1=\lambda x_1$ for some $\lambda\in K^\times$. Thus we may understand $\P^1(K)$ concretely as the set of symbols $[x_0:x_1]$ with $(x_0,x_1)\in K^2\setminus\{(0,0)\}$, subject to the relation $[\lambda x_0:\lambda x_1]=[x_0:x_1]$ for all $\lambda\in K^\times$ and endowed with the topology induced by the quotient $(x_0,x_1)\mapsto[x_0:x_1]$. The projective line is compact and metrizable by the \emph{spherical metric} $\delta:\P^1(K)\times\P^1(K)\to\{0\}\cup\{q^{-v}:v\in\Z_{\geq 0}\}$, which is defined via
\begin{equation}\label{sph_met_def}
\delta([x_0:x_1],[y_0:y_1]):=\frac{|x_0y_1-x_1y_0|}{\max\{|x_0|,|x_1|\}\cdot\max\{|y_0|,|y_1|\}}~.
\end{equation}
In particular, every open set in $\P^1(K)$ is a union of balls of the form
\begin{equation}\label{ball_def}
B_v[x_0:x_1]:=\{[y_0:y_1]\in\P^1(K):\delta([x_0:x_1],[y_0:y_1])\leq q^{-v}\}
\end{equation}
with $[x_0:x_1]\in\P^1(K)$ and $v\in\Z_{\geq0}$, and every such ball is open and compact. The projective linear group $PGL_2(R)$ is the quotient of $GL_2(R)=\{A\in M_2(R):\det(A)\in R^\times\}$ by its center, namely $Z=\{\left(\begin{smallmatrix}\lambda&0\\0&\lambda\end{smallmatrix}\right):\lambda\in R^\times\}\cong R^\times$. It is straightforward to verify that the rule $$\phi[x_0:x_1]:=[ax_0+bx_1:cx_0+dx_1]~,$$ where $\phi\in PGL_2(R)$ and $\left(\begin{smallmatrix}a&b\\c&d\end{smallmatrix}\right)\in GL_2(R)$ is any representative of $\phi$, gives a well-defined transitive action of $PGL_2(R)$ on $\P^1(K)$.  

\begin{lemma}[$PGL_2(R)$-invariance \cite{FiliPetsche}]\label{invariance} The spherical metric satisfies $$\delta(\phi[x_0:x_1],\phi[y_0:y_1])=\delta([x_0:x_1],[y_0:y_1])$$ for all $\phi\in PGL_2(R)$ and all $[x_0:x_1],[y_0:y_1]\in\P^1(K)$. There is also a unique Borel probability measure $\nu$ on $\P^1(K)$ satisfying $$\nu(\phi(M))=\nu(M)$$ for all $\phi\in PGL_2(R)$ and all Borel subsets $M\subset\P^1(K)$. In particular, for each $v\in\Z_{\geq0}$ the relation $\phi(B_v[x_0:x_1])=B_v(\phi[x_0:x_1])$ defines a transitive $PGL_2(R)$ action on the set of balls of radius $q^{-v}$, and thus $\nu(B_v[x_0:x_1])$ depends only on $v$. 
\end{lemma}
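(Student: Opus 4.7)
The plan is to address the three claims of the lemma in order. First, for the $\PGL(R)$-invariance of $\delta$, I would pick a representative $A=\left(\begin{smallmatrix}a&b\\c&d\end{smallmatrix}\right)\in\GL(R)$ of $\phi$. The numerator of $\delta(\phi[x_0:x_1],\phi[y_0:y_1])$ expands by a direct bilinear identity to $|(ad-bc)(x_0y_1-x_1y_0)|$, which equals $|x_0y_1-x_1y_0|$ because $\det A\in R^\times$ forces $|\det A|=1$. For the denominator, the task is to prove $\max\{|ax_0+bx_1|,|cx_0+dx_1|\}=\max\{|x_0|,|x_1|\}$: the inequality $\leq$ is immediate from the ultrametric and $|a|,|b|,|c|,|d|\leq 1$, and the reverse follows by applying the same bound to $A^{-1}=(\det A)^{-1}\left(\begin{smallmatrix}d&-b\\-c&a\end{smallmatrix}\right)$, which also lies in $\GL(R)$.

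Next, for existence and uniqueness of $\nu$, I would observe that $\GL(R)$ is a closed subset of the compact ring $M_2(R)\cong R^4$, hence a compact topological group, and so is its quotient $\PGL(R)$. Letting $\lambda$ denote its normalized Haar measure, the orbit map $\phi\mapsto\phi[1:0]$ is continuous and surjective onto $\P^1(K)$: given any $[x_0:x_1]$, rescale so that $\max\{|x_0|,|x_1|\}=1$, and then complete $(x_0,x_1)$ to a basis of the free $R$-module $R^2$ to obtain an explicit preimage in $\GL(R)$. Pushing $\lambda$ forward along this orbit map produces a $\PGL(R)$-invariant Borel probability measure $\nu$. For uniqueness, any second invariant probability $\nu'$ satisfies $\int f\,d\nu'=\int_{\PGL(R)}\int f(\phi(x))\,d\nu'(x)\,d\lambda(\phi)$ for continuous $f$; Fubini combined with the transitivity of the action and left-invariance of $\lambda$ collapses the inner integral to a constant in $x$ equal to $\int f\,d\nu$, which forces $\nu'=\nu$.

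The third assertion is almost immediate from the first two: metric invariance yields $\phi(B_v[x_0:x_1])=B_v(\phi[x_0:x_1])$ because the same ultrametric inequality $\delta\leq q^{-v}$ encodes membership in both balls. Transitivity of $\PGL(R)$ on $\P^1(K)$ then promotes to a transitive action on the set of radius-$q^{-v}$ balls, and the $\PGL(R)$-invariance of $\nu$ forces $\nu(B_v[x_0:x_1])$ to depend only on $v$. I expect the chief obstacle to be the denominator identity in the metric step: the symmetric appeal to $A^{-1}\in\GL(R)$ is essential, since the one-sided ultrametric estimate alone is not tight in general. All other ingredients reduce to standard compact-group machinery or direct ultrametric calculations.
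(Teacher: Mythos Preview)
Your proof is correct. Note, however, that the paper does not supply its own proof of this lemma: it is quoted directly from the cited reference \cite{FiliPetsche}, so there is no in-paper argument to compare against. Your outline is the standard route and would serve as a self-contained proof: the bilinear determinant identity handles the numerator, the two-sided ultrametric estimate via $A$ and $A^{-1}\in\GL(R)$ pins down the denominator, and the compact-group/pushforward-of-Haar machinery gives existence and uniqueness of $\nu$. One small point worth making explicit in a full write-up is why $\GL(R)$ is compact: since $R^\times$ is closed in $R$ (it is the complement of the open ball $P$), the set $\GL(R)=\det^{-1}(R^\times)$ is closed in the compact space $M_2(R)\cong R^4$.
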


It is routine to verify that $\mu^N\{(x_1,\dots,x_N)\in(y+\pi^vR)^N:x_i=x_j\text{ for some $i\neq j$}\}=0$ and $$\nu^N\{([x_{1,0}:x_{1,1}],\dots,[x_{N,0}:x_{N,1}])\in(\P^1(K))^N:[x_{i,0}:x_{i,1}]=[x_{j,0}:x_{j,1}]\text{ for some $i\neq j$}\}=0$$ for all $N\geq 1$, so we have suitable metrics and measures to define log-Coulomb gases in $X=y+\pi^vR$ and $X=\P^1(K)$. \Cref{gen_Z} specializes to these $X$ as follows:

\begin{definition}\label{main_Z_def}
With $N\geq 0$ and $\bm{s}$ as before, we have $\mathcal{Z}_0(y+\pi^vR,\bm{s})=\mathcal{Z}_0(\P^1(K),\bm{s})=1$, and
\begin{align*}
\mathcal{Z}_N(y+\pi^vR,\bm{s})&=\int_{(y+\pi^vR)^N}\prod_{1\leq i<j\leq N}|x_i-x_j|^{s_{ij}}\,d\mu^N\qquad\text{and}\\
\mathcal{Z}_N(\P^1(K),\bm{s})&=\int_{(\P^1(K))^N}\prod_{1\leq i<j\leq N}\delta([x_{i,0}:x_{i,1}],[x_{j,0}:x_{j,1}])^{s_{ij}}\,d\nu^N
\end{align*}
for $N\geq 1$. The first integral is independent of $y$, and equal to $\mathcal{Z}_N(R,\bm{s})$ if $v=0$ or $\mathcal{Z}_N(P,\bm{s})$ if $v=1$.
\end{definition}
The formulas and domains of absolute convergence for the integrals above can be stated neatly in terms of the following items from \cite{Web20}:

\begin{definition}[Splitting chains]\label{splchdef}
A \emph{splitting chain} of order $N\geq 2$ and length $L\geq 1$ is a tuple $\spl=(\ptn_0,\dots,\ptn_L)$ of partitions of $[N]=\{1,\dots,N\}$ satisfying $$\{[N]\}=\ptn_0>\ptn_1>\ptn_2>\dots>\ptn_L=\{\{1\},\dots,\{N\}\}~.$$
\begin{itemize}
\item[(a)] Each non-singleton part $\lambda\in\ptn_0\cup\ptn_1\cup\dots\cup\ptn_L$ is called a \emph{branch} of $\spl$. We write $\mathcal{B}(\spl)$ for the set of all branches of $\spl$, i.e., $$\mathcal{B}(\spl):=(\ptn_0\cup\dots\cup\ptn_{L-1})\setminus\ptn_L~.$$
\item[(b)] Each $\lambda\in\mathcal{B}(\spl)$ must appear in a final partition $\ptn_\ell$ before refining into two or more parts in $\ptn_{\ell+1}$, so we define its \emph{depth} $\ell_\spl(\lambda)\in\{0,1,\dots,L-1\}$ and \emph{degree} $\deg_\spl(\lambda)\in\{2,3,\dots,N\}$ by $$\ell_\spl(\lambda):=\max\{\ell:\lambda\in\ptn_\ell\}\qquad\text{and}\qquad\deg_\spl(\lambda):=\#\{\lambda'\in\ptn_{\ell_\spl(\lambda)+1}:\lambda'\subset\lambda\}~.$$
\item[(c)] We say that $\spl$ is \emph{reduced} if each $\lambda\in\mathcal{B}(\spl)$ satisfies $\lambda\in\ptn_\ell\iff\ell=\ell_\spl(\lambda)$.
\end{itemize}
Write $\mathcal{R}_N$ for the set of reduced splitting chains of order $N$ and define $$\Omega_N:=\bigcap_{\substack{\lambda\subset[N]\\\#\lambda>1}}\left\{\bm{s}:\re(e_\lambda(\bm{s}))>0\right\}\qquad\text{where}\qquad e_\lambda(\bm{s}):=\#\lambda-1+\sum_{\substack{i<j\\i,j\in\lambda}}s_{ij}~.$$ 
\end{definition}

Proposition 3.15 and Theorem 2.6(c) in \cite{Web20} imply the following proposition, which shall be generalized slightly in order to prove the main results of this paper:

\begin{proposition}\label{R_prop}
For $N\geq 2$, the integral $\mathcal{Z}_N(R,\bm{s})$ converges absolutely if and only if $\bm{s}\in\Omega_N$, and in this case it can be written as the finite sum $$\mathcal{Z}_N(R,\bm{s})=q^{\sum_{i<j}s_{ij}}\sum_{\spl\in\mathcal{R}_N}\prod_{\lambda\in\mathcal{B}(\spl)}\frac{(q-1)_{\deg_{\spl}(\lambda)-1}}{q^{e_\lambda(\bm{s})}-1}~.$$ Here $(q-1)_n$ stands for the degree $n$ falling factorial $(z)_n=z(z-1)\dots(z-n+1)\in\Z[z]$ evaluated at the integer $z=q-1$.
\end{proposition}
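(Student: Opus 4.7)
My plan is to derive both assertions by a single recursion arising from the coset decomposition \eqref{R_decomp} of $R$. For a tuple $(x_1,\dots,x_N)\in R^N$, let $\ptn_1$ be the partition of $[N]$ whose parts are the fibers of the map $i\mapsto x_i+P$. If $i,j$ lie in distinct parts of $\ptn_1$ then $|x_i-x_j|=1$ by \eqref{R_decomp}, while if they share a coset $a+P$ the substitution $x_i=a+\pi y_i$ (with $y_i\in R$) scales both $|x_i-x_j|$ and $d\mu(x_i)$ by $q^{-1}$. Integrating the defining integrand over the cells indexed by such assignments yields the functional equation
\begin{equation*}
\mathcal{Z}_N(R,\bm{s}) \;=\; q^{-N}\sum_{\ptn}(q)_{\#\ptn}\prod_{\lambda\in\ptn}q^{-\sum_{\{i<j\}\subseteq\lambda}s_{ij}}\mathcal{Z}_{\#\lambda}(R,\bm{s}|_\lambda),
\end{equation*}
where $\ptn$ ranges over all partitions of $[N]$ and $(q)_k:=q(q-1)\cdots(q-k+1)$ counts the injections from the $k=\#\ptn$ parts of $\ptn$ into $R/P$.

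Isolating the trivial term $\ptn=\{[N]\}$ on the left produces
\begin{equation*}
\mathcal{Z}_N(R,\bm{s}) \;=\; \frac{q^{\sum_{i<j}s_{ij}-1}}{q^{e_{[N]}(\bm{s})}-1}\sum_{\ptn>\{[N]\}}(q)_{\#\ptn}\prod_{\lambda\in\ptn}q^{-\sum_{\{i<j\}\subseteq\lambda}s_{ij}}\mathcal{Z}_{\#\lambda}(R,\bm{s}|_\lambda),
\end{equation*}
and applying the same identity recursively to each $\mathcal{Z}_{\#\lambda}$ with $\#\lambda\geq 2$ unfolds the right-hand side into a sum indexed by strict refinement sequences $\{[N]\}=\ptn_0>\ptn_1>\cdots>\ptn_L=\{\{1\},\dots,\{N\}\}$, i.e., exactly the reduced splitting chains in $\mathcal{R}_N$. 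A short calculation confirms that at each branch $\lambda\in\mathcal{B}(\spl)$ the recursion contributes $(q)_{\deg_\spl(\lambda)}\cdot q^{-1}/(q^{e_\lambda(\bm{s})}-1)=(q-1)_{\deg_\spl(\lambda)-1}/(q^{e_\lambda(\bm{s})}-1)$, while the residual $q$-exponents telescope cleanly to the global prefactor $q^{\sum_{i<j}s_{ij}}$, giving the claimed formula.

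For convergence, sufficiency of $\bm{s}\in\Omega_N$ is immediate from the closed form. For necessity I would induct on $N$, specializing to real $\bm{s}$ so that the integrand is nonnegative and the functional equation becomes an identity of (possibly infinite) nonnegative quantities. If $\re(e_{[N]}(\bm{s}))\leq 0$ then $q^{1-N-\sum s_{ij}}\geq 1$ makes the coefficient $1-q^{1-N-\sum s_{ij}}$ on the left nonpositive, forcing $\mathcal{Z}_N(R,\bm{s})=+\infty$; localizing to configurations where the indices in a given $\lambda$ cluster in one coset and the others are held apart propagates necessity to each $e_\lambda(\bm{s})$ with $\#\lambda\geq 2$.

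The main obstacle is the combinatorial bookkeeping: checking that iterated strict refinements biject with reduced splitting chains and that the product of local branch factors telescopes exactly as stated. Since this is precisely the content of Proposition 3.15 and Theorem 2.6(c) of \cite{Web20}, one could alternatively quote those results directly; however, spelling out the recursion has the advantage of adapting immediately to the variants of $\mathcal{Z}_N$ over balls, over $\P^1(K)$, and in the other settings needed in the remainder of the paper.
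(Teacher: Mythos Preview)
Your recursive argument via the coset decomposition \eqref{R_decomp} is correct in outline and yields exactly the claimed formula; the telescoping of the $q$-powers and the bijection between iterated strict refinements and reduced splitting chains are routine to verify. The paper itself does not give a proof of this proposition at all: it simply imports the result from \cite{Web20} (Proposition~3.15 and Theorem~2.6(c)), and the later \Cref{Iold} likewise invokes \cite{Web20} for the key step. So your sketch is not so much an alternative to the paper's proof as a self-contained reconstruction of the argument the paper is citing---which you yourself note at the end. One small point worth tightening in the necessity direction: the step ``$1-q^{1-N-\sum s_{ij}}\le 0$ forces $\mathcal{Z}_N=+\infty$'' needs the observation that the right-hand side of the functional equation has at least one strictly positive finite term coming from a nontrivial partition (e.g.\ a two-block partition, using $(q)_2>0$), so that the coefficient inequality genuinely contradicts $0<\mathcal{Z}_N<\infty$.
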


\section{Statement of results}\label{2}
\subsection{The projective analogue}\label{2_1}

Our first main result is the following analogue of \Cref{R_prop}:

\begin{theorem}\label{main1}
For $N\geq 2$, the integral $\mathcal{Z}_N(\P^1(K),\bm{s})$ converges absolutely if and only if $\bm{s}\in\Omega_N$, and in this case it can be written as the finite sum $$\mathcal{Z}_N(\P^1(K),\bm{s})=\frac{1}{(q+1)^{N-1}}\sum_{\spl\in\mathcal{R}_N}\frac{q^{N+\sum_{i<j}s_{ij}}+1-\deg_{\spl}([N])}{q+1-\deg_{\spl}([N])}\prod_{\lambda\in\mathcal{B}(\spl)}\frac{(q-1)_{\deg_{\spl}(\lambda)-1}}{q^{e_\lambda(\bm{s})}-1}~.$$ The summand for each $\spl\in\mathcal{R}_N$ is defined for all prime powers $q$, as the denominator $q+1-\deg_\spl([N])$ is cancelled by the factor $(q-1)_{\deg_\spl([N])-1}$ inside the product over $\lambda\in\mathcal{B}(\spl)$.
\end{theorem}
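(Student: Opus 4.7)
The strategy is to decompose $\P^1(K)$ into its $q+1$ balls of radius $q^{-1}$, use \Cref{invariance} to reduce integration over each ball to integration over the open unit ball $P$, then apply (a scaled form of) \Cref{R_prop} and collect the result into a sum over reduced splitting chains.

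First, $\P^1(K) = B^{(0)} \sqcup \dots \sqcup B^{(q)}$ is a disjoint union of $q+1$ balls of radius $q^{-1}$, each $PGL_2(R)$-equivalent to $B_1[0{:}1] = \{[x{:}1]:x\in P\}$. Inductively computing $\nu(B_v[x_0{:}x_1]) = q^{1-v}/(q+1)$ for $v\geq 1$ and comparing with $\mu(\pi^v R) = q^{-v}$ shows that the isometry $[x{:}1]\leftrightarrow x$ identifies $\nu|_{B_1[0{:}1]}$ with $\tfrac{q}{q+1}\mu|_P$. Two points in distinct balls $B^{(i)}\neq B^{(j)}$ satisfy $\delta=1$, so the integrand decouples. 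Partitioning $(\P^1(K))^N$ by which ball each coordinate lies in, grouping by the induced set partition $\lambda\in\mathrm{Part}([N])$, and summing over the $(q+1)_{|\lambda|}$ injective labelings of parts by balls gives
\begin{equation*}
\mathcal{Z}_N(\P^1(K),\bm{s}) = \left(\tfrac{q}{q+1}\right)^{\!N}\sum_{\lambda\in\mathrm{Part}([N])}(q+1)_{|\lambda|}\prod_{\lambda_m\in\lambda}\mathcal{Z}_{|\lambda_m|}(P,\bm{s}_{\lambda_m}),
\end{equation*}
where $\bm{s}_{\lambda_m}$ restricts $\bm{s}$ to pairs inside $\lambda_m$. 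Scaling $x\mapsto\pi x$ yields $\mathcal{Z}_n(P,\bm{s}') = q^{-n-\sum s_{ij}}\mathcal{Z}_n(R,\bm{s}')$, so \Cref{R_prop} shows that each factor converges iff $\bm{s}_{\lambda_m}\in\Omega_{|\lambda_m|}$. Since the finite sum is absolutely convergent precisely when this holds for every $\lambda_m$ appearing in some partition (including $\lambda_m=[N]$ from the trivial partition), this is equivalent to $\bm{s}\in\Omega_N$.

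Next, substituting $\mathcal{Z}_n(P,\bm{s}') = q^{-n}\sum_{\spl\in\mathcal{R}_n}\prod_\mu\frac{(q-1)_{\deg_\spl(\mu)-1}}{q^{e_\mu(\bm{s}')}-1}$ (with $\mathcal{R}_1=\{\emptyset\}$ giving $1$) absorbs the $q^{-|\lambda_m|}$ factors into $q^{-N}$, leaving a prefactor $(q+1)^{-N}$. The standard bijection between (a partition $\lambda$ of $[N]$ with $|\lambda|=k\geq 2$ together with reduced sub-chains on each non-singleton $\lambda_m$) and (a reduced chain $\spl\in\mathcal{R}_N$ with $\deg_\spl([N])=k$) lets me reindex the $|\lambda|\geq 2$ contribution as a sum over $\mathcal{R}_N$, using $(q+1)_k/(q+1)=(q)_{k-1}$. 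The trivial partition $\lambda=\{[N]\}$ contributes $(q+1)\mathcal{Z}_N(P,\bm{s})$, which the same chain expansion further decomposes by $k=\deg_\spl([N])$. Combining both contributions and collecting the root factor $\frac{(q-1)_{k-1}}{q^{e_{[N]}(\bm{s})}-1}$ back into the product $\prod_{\mu\in\mathcal{B}(\spl)}$, I obtain
\begin{equation*}
\mathcal{Z}_N(\P^1(K),\bm{s})=\frac{1}{(q+1)^{N-1}}\sum_{\spl\in\mathcal{R}_N}\left[1+\frac{(q)_{k-1}(q^{e_{[N]}(\bm{s})}-1)}{(q-1)_{k-1}}\right]\prod_{\mu\in\mathcal{B}(\spl)}\frac{(q-1)_{\deg_\spl(\mu)-1}}{q^{e_\mu(\bm{s})}-1},
\end{equation*}
where $k=\deg_\spl([N])$.

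The final step is the elementary identity
\begin{equation*}
(q-1)_{k-1}+(q)_{k-1}(q^{e_{[N]}(\bm{s})}-1)=(q^{N+\sum s_{ij}}+1-k)(q-1)_{k-2},
\end{equation*}
which follows from $(q-1)_{k-1}=(q+1-k)(q-1)_{k-2}$ and $(q)_{k-1}=q(q-1)_{k-2}$ upon noting $q^{1+e_{[N]}(\bm{s})}=q^{N+\sum s_{ij}}$. Dividing by $(q-1)_{k-1}$ converts the bracketed factor into $\frac{q^{N+\sum s_{ij}}+1-k}{q+1-k}$, matching the claimed formula, and the same relation $(q-1)_{k-1}=(q+1-k)(q-1)_{k-2}$ establishes the final remark that each summand is well-defined for all prime powers $q$ (the $q+1-k$ denominator is cancelled by the $(q-1)_{k-1}$ inside the product over $\mathcal{B}(\spl)$). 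I expect the main technical obstacles to be (i) the verification that $\nu|_{B^{(i)}}$ pulls back to $\tfrac{q}{q+1}\mu|_P$ (a short induction on $v$ using \Cref{invariance}), and (ii) the careful combinatorial bookkeeping of the bijection between (partitions plus sub-chains) and (reduced splitting chains); the closing algebraic identity is a routine one-line calculation.
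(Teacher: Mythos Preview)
Your proposal is correct and follows essentially the same route as the paper: decompose $(\P^1(K))^N$ via the $q+1$ balls of radius $q^{-1}$, pull back to $P$ using the identification of $\nu|_{B_1[0:1]}$ with $\tfrac{q}{q+1}\,\mu|_P$, expand each $\mathcal{Z}_I(P,\bm{s})$ by the scaled form of \Cref{R_prop}, reassemble via the bijection between (a top-level partition of $[N]$ together with reduced sub-chains on its parts) and $\mathcal{R}_N$, and finish with the falling-factorial identity $(q-1)_{k-1}=(q+1-k)(q-1)_{k-2}$, $(q)_{k-1}=q(q-1)_{k-2}$. The only cosmetic difference is that the paper first sums over \emph{ordered} partitions $(I_0,\dots,I_q)\vdash[N]$ and groups them afterward to extract the count $(q+1)_d$, whereas you pass directly to unordered set partitions weighted by $(q+1)_{|\lambda|}$.
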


The evident similarities between \Cref{R_prop} and \Cref{main1} follow from explicit relationship between the metrics and measures on $K$ and those on $\P^1(K)$. These relationships also play a role in the upcoming results, so they are worth recalling now. Note that $[x_0:x_1]\neq[1:0]$ if and only if $x_1\neq 0$, in which case $x=x_0/x_1$ is the unique element of $K$ satisfying $[x:1]=[x_0:x_1]$. Therefore the rule $\iota(x):=[x:1]$ defines a bijection $\iota:K\to\P^1(K)\setminus\{[1:0]\}$ and relates the metric structures of $K$ and $\P^1(K)$ in a simple way: Given $x,y\in K$, \eqref{sph_met_def} implies $\delta(\iota(x),[1:0])=(\max\{1,|x|\})^{-1}$ and
\begin{equation}\label{metric_rule}
\delta(\iota(x),\iota(y))=\begin{cases}|x-y|&\text{if $x,y\in R$},\\1&\text{if $x\in R$ and $y\notin R$},\\|1/x-1/y|&\text{if }x,y\notin R.\end{cases}
\end{equation}
Using the definitions \eqref{ball_def} and $v_K(x):=-\log_q|x|$ for $x\in K^\times$, along with \eqref{metric_rule} and the \emph{strong triangle equality} (i.e., $|x+y|=\max\{|x|,|y|\}$ whenever $|x|\neq|y|$), one easily verifies that
\begin{equation}\label{ball_rule}
\iota(y+\pi^vR)=\begin{cases}B_v(\iota(y))&\text{if }y\in R,\\B_{v-2v_K(y)}(\iota(y))&\text{if }y\notin R,\end{cases}
\end{equation}
whenever $y\in K$ and $v\in\Z_{>0}$. That is, $\iota$ sends the open ball of radius $r\in(0,1)$ centered at $y\in K$ onto the open ball of radius $r/\max\{1,|y|^2\}$ centered at $\iota(y)\in\P^1(K)\setminus\{[1:0]\}$, so $\iota:K\to\P^1(K)\setminus\{[1:0]\}$ is a homeomorphism that restricts to an isometry on $R$ and a contraction on $K\setminus R$.

The map $\iota$ also relates the measures on $K$ and $\P^1(K)$ in a simple way: Given $v>0$ and a complete set of representatives $y_1,\dots,y_{q^v}\in R$ for the cosets of $\pi^vR\subset R$, applying \eqref{ball_rule} to the partition $R=(y_1+\pi^vR)\sqcup\dots\sqcup(y_{q^v}+\pi^vR)$ yields $$\iota(R)=B_v[y_1:1]\sqcup\cdots\sqcup B_v[y_{q^v}:1]~.$$ Therefore $PGL_2(R)$-invariance of $\nu$ (\Cref{invariance}) implies $\nu(\iota(R))=q^v\cdot\nu(B_v[0:1])$. On the other hand, $$\iota(K\setminus R)=\iota(\{x:|x|\geq q\})=\{\iota(x):\delta(\iota(x),[1:0])\leq q^{-1}\}=B_1[1:0]\setminus\{[1:0]\}$$ implies $\iota(R)\sqcup B_1[1:0]=\iota(R)\sqcup\iota(K\setminus R)\sqcup\{[1:0]\}=\P^1(K)$, which has measure 1. But $\nu(\iota(R))=q\cdot\nu(B_1[1:0])$, so the measure of $\iota(R)$ must be $q/(q+1)$, and therefore every ball $B_v[x_0:x_1]\subset\P^1(K)$ with $v>0$ has measure $q^{-v}\cdot q/(q+1)$. Combining this with \eqref{ball_rule}, one concludes that the measure $\nu$ on $\P^1(K)\setminus\{[1:0]\}$ pulls back along $\iota$ to an explicit measure on $K$:
\begin{equation}\label{measure_rule}
\nu(\iota(M))=\frac{q}{q+1}\int_M\left(\max\{1,|x|^2\}\right)^{-1}\,d\mu\qquad\text{for any Borel subset }M\subset K~.
\end{equation}

Finally, \eqref{R_decomp} and \eqref{ball_rule} give a nice refinement of $\P^1(K)=\iota(R)\sqcup B_1[1:0]$ in terms of the $(q-1)$th roots of unity in $K$, which should be understood as the projective analogue of \eqref{R_decomp}:
\begin{equation}\label{P(K)_decomp}
\P^1(K)=B_1[0:1]\sqcup\underbrace{B_1[1:1]\sqcup B_1[\xi:1]\sqcup\dots\sqcup B_1[\xi^{q-2}:1]}_{\iota(R^\times)}\sqcup B_1[1:0]~.
\end{equation}
Indeed, all $q+1$ of the parts in the partition are balls with measure $1/(q+1)$ and radius $q^{-1}$, and two points $[x_0:x_1],[y_0:y_1]\in\P^1(K)$ satisfy $\delta([y_0:y_1],[y_0:y_1])=1$ if and only if $[x_0:x_1]$ and $[y_0:y_1]$ belong to different parts. Note that $\iota$ sends $R^\times$ onto the ``equator" $\iota(R^\times)$, i.e., the set of points in $\P^1(K)$ with $\delta$-distance 1 from both the ``south pole" $[0:1]$ and the ``north pole" $[1:0]$.

\subsection{Relationships between grand canonical partition functions}\label{2_2}
So far we have only considered log-Coulomb gases with $N$ labeled (and hence distinguishable) particles. Our second main result concerns the situation in which all particles are identical with charge $\mathfrak{q}_i=1$ for all $i$, in which case a microstate $(x_1,\dots,x_N)\in X^N$ is regarded as unique only up to permutations of its entries. Since the energy $E(x_1,\dots,x_N)$ and measure on $X^N$ are invariant under such permutations, each unlabeled microstate makes the contribution $e^{-\beta E(x_1,\dots,x_N)}d\mu^N$ to the integral $Z_N(X,\beta)$ in \eqref{canonical} precisely $N!$ times. Therefore the canonical partition function for the unlabeled microstates is given by $Z_N(X,\beta)/N!$. We further assume that the system exchanges particles with the heat reservoir with chemical potential $\eta$ and define the \emph{fugacity parameter} $f=e^{\eta\beta}$. In this situation the particle number $N\geq 0$ is treated as a random variable and the canonical partition function is replaced by the \emph{grand canonical partition function}
\begin{equation}\label{granddef}
Z(f,X,\beta):=\sum_{N=0}^\infty Z_N(X,\beta)\frac{f^N}{N!}
\end{equation}
with the familiar convention $Z_0(X,\beta)=1$. Many properties of the system can be deduced from the grand canonical partition function. For instance, if $\beta>0$ is fixed and $Z_N(X,\beta)$ is sub-exponential in $N$, then $Z(f,X,\beta)$ is analytic in $f$ and the expected number of particles in the system is given by $f\frac{\partial}{\partial f}\log(Z(f,X,\beta))$. The canonical partition function for each $N\geq 0$ can also be recovered by evaluating the $N$th derivative of $Z(f,X,\beta)$ with respect to $f$ at $f=0$.

We are interested in the examples $Z(f,R,\beta)$, $Z(f,P,\beta)$, and $Z(f,\P^1(K),\beta)$, which turn out to share several common properties and simple relationships. By setting $s_{ij}=\beta$ in \Cref{main_Z_def}, one sees that $|Z_N(R,\beta)|_\C$, $|Z_N(P,\beta)|_\C$, and $|Z_N(\P^1(K),\beta)|_\C$ are bounded above by 1 for all $N\geq 0$ and all $\beta>0$, and hence $Z(f,R,\beta)$, $Z(f,P,\beta)$, and $Z(f,\P^1(K),\beta)$ are analytic in $f$ when $\beta>0$. Sinclair recently found an elegant relationship between the first two, which is closely related to the partition of $R$ in \eqref{R_decomp}:

\begin{proposition}[The $q$th Power Law \cite{ChrisA}]\label{Chris}
For $\beta>0$ we have $$Z(f,R,\beta)=(Z(f,P,\beta))^q~.$$ 
\end{proposition}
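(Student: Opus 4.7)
My plan is to unfold the partition \eqref{R_decomp} on each factor of $R^N$, decouple the interactions across cosets via the strong triangle inequality, and then recognize the resulting multinomial structure as the $q$-fold Cauchy product of exponential generating functions. For each function $\sigma:[N]\to\{0,1,\xi,\dots,\xi^{q-2}\}$ I set $R_\sigma:=\prod_{i=1}^N(\sigma(i)+P)$, so that $R^N=\bigsqcup_\sigma R_\sigma$. On $R_\sigma$, if $\sigma(i)\neq\sigma(j)$ then $x_i$ and $x_j$ lie in different cosets of $P$ in $R$, which (as the paper already observes just after \eqref{R_decomp}) forces $|x_i-x_j|=1$. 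Hence every cross-coset factor in the integrand $\prod_{i<j}|x_i-x_j|^\beta$ is trivial, and the integrand factorizes as a product over cosets.

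Next, for each coset $c=\xi^k+P$ carrying the $n_c:=\#\sigma^{-1}(\xi^k)$ indices assigned to it, translation invariance of the Haar measure $\mu$ identifies the intra-coset integral with $Z_{n_c}(P,\beta)$ via the shift $x_i\mapsto x_i-\xi^k$. Summing over $\sigma$ and grouping by the occupancy vector $(n_c)_c$ with $\sum_c n_c=N$ yields the multinomial identity
$$Z_N(R,\beta) = \sum_{n_1+\cdots+n_q=N}\binom{N}{n_1,\dots,n_q}\prod_{c=1}^q Z_{n_c}(P,\beta),$$
since the number of $\sigma$'s with a prescribed occupancy vector $(n_1,\dots,n_q)$ is exactly the multinomial coefficient.

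Finally, dividing by $N!$, multiplying by $f^N$, and summing over $N\geq 0$ converts this multinomial sum into a $q$-fold Cauchy product of the series $\sum_n Z_n(P,\beta)f^n/n!$, giving
$$Z(f,R,\beta) = \sum_{N=0}^\infty\frac{f^N}{N!}\sum_{n_1+\cdots+n_q=N}\binom{N}{n_1,\dots,n_q}\prod_{c=1}^q Z_{n_c}(P,\beta) = \bigl(Z(f,P,\beta)\bigr)^q,$$
which is the desired identity. The rearrangement of sums is legitimate because each $Z_n(P,\beta)$ has absolute value at most $\mu(P)^n=q^{-n}$ when $\beta>0$, as remarked in the paragraph preceding the statement, so all series in sight converge absolutely and uniformly on compact $f$-sets.

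The main obstacle is essentially linguistic rather than mathematical: there is no hard analytic step, and the argument is powered entirely by the sharp equality $|x_i-x_j|=1$ whenever $x_i,x_j$ lie in distinct cosets of $P$, together with translation invariance on each coset. What does require some care is the combinatorial bookkeeping that passes between the $q^N$ labeled assignments $\sigma$, their occupancy vectors $(n_c)$, and the exponential generating function, but this is routine once the $N!$ in the denominator is properly absorbed into the multinomial coefficient.
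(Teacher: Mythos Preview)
Your proof is correct and follows exactly the approach the paper attributes to \cite{ChrisA}: partition $R^N$ according to the coset decomposition \eqref{R_decomp}, use $|x_i-x_j|=1$ across distinct cosets to decouple the integrand, apply translation invariance on each coset to get $Z_{n_c}(P,\beta)$, and recognize the resulting multinomial sum as the coefficient identity \eqref{Rcoeff}, which the paper explicitly notes is equivalent to the $q$th Power Law. This is also the same template the paper itself uses (via \Cref{main}) to prove the $(q+1)$th Power Law for $\P^1(K)$.
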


Roughly speaking, the $q$th Power Law states that a log-Coulomb gas in $R$ exchanging energy and particles with a heat reservoir ``factors" into $q$ identical sub-gases (one in each coset of $P$) that exchange energy and particles with the reservoir. For $\beta>0$, note that the series equation $Z(f,R,\beta)=(Z(f,P,\beta))^q$ is equivalent to the coefficient identities
\begin{equation}\label{Rcoeff}
\frac{Z_N(R,\beta)}{N!}=\sum_{\substack{N_0+\dots+N_{q-1}=N\\N_0,\dots,N_{q-1}\geq 0}}\prod_{k=0}^{q-1}\frac{Z_{N_k}(P,\beta)}{N_k!}\qquad\text{for all $N\geq 0$.}
\end{equation}
The $\beta=1$ case of \eqref{Rcoeff} is given in \cite{BGMR}, in which the positive number $Z_N(R,1)/N!$ is recognized as the probability that a random monic polynomial in $R[x]$ splits completely in $R$. The more general $\beta>0$ case given in \cite{ChrisA} makes explicit use of the partition of $R$ into cosets of $P$ (as in \eqref{R_decomp}). In \Cref{3}, we will use the analogous partition of $\P^1(K)$ into $q+1$ balls (recall \eqref{P(K)_decomp}) to show that
\begin{equation}\label{P(K)coeff}
\frac{Z_N(\P^1(K),\beta)}{N!}=\sum_{\substack{N_0+\dots+N_q=N\\N_0,\dots,N_q\geq 0}}\prod_{k=0}^q\left(\frac{q}{q+1}\right)^{N_k}\frac{Z_{N_k}(P,\beta)}{N_k!}\qquad\text{for all $\beta>0$ and $N\geq 0$,}
\end{equation}
which immediately implies our second main result:
\begin{theorem}[The $(q+1)$th Power Law]\label{main2}
For all $\beta>0$ we have $$Z(f,\P^1(K),\beta)=(Z(\tfrac{qf}{q+1},P,\beta))^{q+1}~.$$
\end{theorem}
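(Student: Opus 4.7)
The text preceding \Cref{main2} already reduces the $(q+1)$th Power Law to the coefficient identity \eqref{P(K)coeff}, so I would focus entirely on proving that identity and then observe that it is precisely the multinomial (Cauchy) expansion of $(Z(\tfrac{qf}{q+1},P,\beta))^{q+1}$. The overall strategy is to compute $Z_N(\P^1(K),\beta)$ by decomposing the domain $(\P^1(K))^N$ according to the partition of $\P^1(K)$ into $q+1$ congruent balls from \eqref{P(K)_decomp}, and then to evaluate each piece using $\PGL(R)$-invariance together with the near-isometry $\iota:P\to B_1[0:1]$.

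Write the $q+1$ balls appearing in \eqref{P(K)_decomp} as $C_0,\dots,C_q$. Taking $N$-fold products yields the disjoint decomposition
$$(\P^1(K))^N=\bigsqcup_{\phi:[N]\to\{0,\dots,q\}}C_{\phi(1)}\times\dots\times C_{\phi(N)}~.$$
As noted right after \eqref{P(K)_decomp}, any two points lying in distinct $C_k$'s have spherical distance $1$, so on the $\phi$-cell the integrand factors across the balls: only pairs $(i,j)$ with $\phi(i)=\phi(j)$ contribute to $\prod_{i<j}\delta(x_i,x_j)^\beta$. Writing $N_k:=\#\phi^{-1}(k)$, the integral over the $\phi$-cell therefore reduces to $\prod_{k=0}^q I^{(k)}_{N_k}$, where $I^{(k)}_n$ denotes the integral of $\prod_{i<j}\delta(x_i,x_j)^\beta$ over $C_k^n$ with respect to $\nu^n$.

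Next I would show $I^{(k)}_n=\bigl(\tfrac{q}{q+1}\bigr)^n Z_n(P,\beta)$ for all $k$ and $n$. By \Cref{invariance}, $\PGL(R)$ acts transitively on balls of radius $q^{-1}$ while preserving both $\delta$ and $\nu$, so $I^{(k)}_n$ is independent of $k$; one may take $C_k=B_1[0:1]$. Applying \eqref{ball_rule} with $y=0$, $v=1$ gives a bijection $\iota|_P:P\to B_1[0:1]$ which is isometric for $d$ and $\delta$ by the first case of \eqref{metric_rule}, and specializing \eqref{measure_rule} to $M\subset P$ (where $\max\{1,|x|^2\}=1$) shows that $\iota|_P$ pulls $\nu|_{B_1[0:1]}$ back to $\tfrac{q}{q+1}\mu|_P$. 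A change of variables then gives $I^{(k)}_n=\bigl(\tfrac{q}{q+1}\bigr)^n\int_{P^n}\prod_{i<j}|y_i-y_j|^\beta\,d\mu^n=\bigl(\tfrac{q}{q+1}\bigr)^n Z_n(P,\beta)$.

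Finally, the number of maps $\phi:[N]\to\{0,\dots,q\}$ with prescribed fiber sizes $(N_0,\dots,N_q)$ is the multinomial coefficient $N!/(N_0!\cdots N_q!)$, so summing the $\phi$-cell contributions and dividing by $N!$ produces \eqref{P(K)coeff} exactly. Multiplying by $f^N$ and summing over $N\geq 0$, the resulting series is the $(q+1)$-fold Cauchy product of the exponential generating function $\sum_M\bigl(\tfrac{q}{q+1}\bigr)^M Z_M(P,\beta)f^M/M!=Z(\tfrac{qf}{q+1},P,\beta)$ (by \eqref{granddef}), which yields \Cref{main2}. The only non-routine point is the measure rescaling in the third step: carefully tracking the factor $q/(q+1)$ from \eqref{measure_rule} is what produces, and explains, the fugacity substitution $f\mapsto qf/(q+1)$ in the final formula.
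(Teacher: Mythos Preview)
Your proof is correct and follows essentially the same route as the paper: the paper likewise decomposes $(\P^1(K))^N$ via the partition \eqref{P(K)_decomp} into cells indexed by ordered partitions $(I_0,\dots,I_q)\vdash[N]$ (equivalent to your maps $\phi$), uses $\PGL(R)$-invariance and the change of variables along $\iota|_P$ to identify each cell integral with $(q/(q+1))^N\prod_k Z_{\#I_k}(P,\beta)$, and then groups by part sizes via the multinomial coefficient to obtain \eqref{P(K)coeff}. The only organizational difference is that the paper first proves this decomposition for general $\bm{s}$ (as \Cref{main}) and then specializes to $s_{ij}=\beta$, whereas you carry out the argument directly at $s_{ij}=\beta$.
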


Like the $q$th Power Law, the $(q+1)$th Power Law roughly states that a log-Coulomb gas in $\P^1(K)$ exchanging energy and particles with a heat reservoir ``factors" into $q+1$ identical sub-gases in the balls $B_1[0:1]$, $B[1:1]$, $B[\xi:1]$, $\dots$, $B[\xi^{q-2}:1]$, $B_1[1:0]$ (each isometrically homeomorphic to $P$), with fugacity $\frac{qf}{q+1}$. The $q$th Power Law allows the $(q+1)$th Power Law to be written more crudely as
\begin{equation}\label{P(K)=RP}
Z(f,\P^1(K),\beta)=Z(\tfrac{qf}{q+1},R,\beta)\cdot Z(\tfrac{qf}{q+1},P,\beta)~,
\end{equation}
which is to say that the gas in $\P^1(K)$ ``factors" into two sub-gases: one in $\iota(R)$ and one in $B[1:0]$ (which are respectively isometrically homeomorphic to $R$ and $P$), both with fugacity $\frac{qf}{q+1}$.

\subsection{Functional equations and a quadratic recurrence}\label{2_3}
Although \Cref{R_prop} and \Cref{main1} provide explicit formulas for $Z_N(R,\beta)$ and $Z_N(\P^1(K),\beta)$, they are not efficient for computation because they require a complete list of reduced splitting chains of order $N$. For a practical alternative, we take advantage of both Power Laws and the following ideas from \cite{BGMR} and \cite{ChrisA}: Apply $Z(f,P,\beta)\cdot\frac{\partial}{\partial f}$ to the equation $Z(f,R,\beta)=(Z(f,P,\beta))^q$ to get $$Z(f,P,\beta)\cdot\frac{\partial}{\partial f}Z(f,R,\beta)=q\cdot Z(f,R,\beta)\cdot\frac{\partial}{\partial f}Z(f,P,\beta)~,$$ then expand both sides as power series in $f$ to obtain the coefficient equations
\begin{equation}\label{coeff}
\sum_{k=1}^N\frac{Z_{N-k}(P,\beta)}{(N-k)!}\frac{Z_k(R,\beta)}{(k-1)!}=q\cdot\sum_{k=1}^N\frac{Z_{N-k}(R,\beta)}{(N-k)!}\frac{Z_k(P,\beta)}{(k-1)!}\qquad\text{for all }N\geq 1.
\end{equation}
The identities $Z_j(P,\beta)=q^{-j-\binom{j}{2}\beta}Z_j(R,\beta)$ follow easily from \Cref{main_Z_def} and eliminate all instances of $Z_j(P,\beta)$ in \eqref{coeff} while introducing powers of the form $q^{-j-\binom{j}{2}\beta}$. For $N\geq 2$, a careful rearrangement of these powers, the factorials, and the terms in \eqref{coeff} yields the explicit recurrence $$\frac{Z_N(R,\beta)}{N!q^{\frac{1}{2}\binom{N}{2}\beta}}=\sum_{k=1}^{N-1}\frac{k}{N}\cdot\frac{\sinh\left(\frac{\log(q)}{2}\left[\left(N+\binom{N}{2}\beta\right)\left(1-\frac{2k}{N}\right)+1\right]\right)}{\sinh\left(\frac{\log(q)}{2}\left[\left(N+\binom{N}{2}\beta\right)-1\right]\right)}\cdot\frac{Z_{N-k}(R,\beta)}{(N-k)!q^{\frac{1}{2}\binom{N-k}{2}\beta}}\cdot\frac{Z_k(R,\beta)}{k!q^{\frac{1}{2}\binom{k}{2}\beta}}~.$$ The expression at left is identically 1 if $N=0$ or $N=1$, so induction confirms that it is polynomial in ratios of hyperbolic sines for all $N\geq 0$. In particular, its dependence on $q$ is carried only by the factor $\log(q)$ appearing inside the hyperbolic sines, which motivates the following lemma:     

\begin{lemma}[The Quadratic Recurrence]\label{quad_rec}
Set $F_0(t,\beta)=F_1(t,\beta)=1$ for all $\beta\in\C$ and all $t\in\R$. For $N\geq 2$, $\re(\beta)>-2/N$, and $t\in\R$, define $F_N(t,\beta)$ by the recurrence $$F_N(t,\beta):=\begin{cases}\displaystyle{\sum_{k=1}^{N-1}\frac{k}{N}\cdot\frac{\sinh\left(\frac{t}{2}\left[\left(N+\binom{N}{2}\beta\right)\left(1-\frac{2k}{N}\right)+1\right]\right)}{\sinh\left(\frac{t}{2}\left[\left(N+\binom{N}{2}\beta\right)-1\right]\right)}\cdot F_{N-k}(t,\beta)\cdot F_k(t,\beta)}&\text{if $t\neq 0$,}\\ \\\displaystyle{\sum_{k=1}^{N-1}\frac{k}{N}\cdot\frac{\left(N+\binom{N}{2}\beta\right)\left(1-\frac{2k}{N}\right)+1}{\left(N+\binom{N}{2}\beta\right)-1}\cdot F_{N-k}(0,\beta)\cdot F_k(0,\beta)}&\text{if $t=0$.}\end{cases}$$
\begin{enumerate}
\item[(a)] For fixed $N\geq 2$ and fixed $t$, the function $\beta\mapsto F_N(t,\beta)$ is holomorphic for $\re(\beta)>-2/N$.
\item[(b)] For fixed $N\geq 2$ and fixed $\beta$, the function $t\mapsto F_N(t,\beta)$ is defined, smooth, and even on $\R$.
\end{enumerate}
\end{lemma}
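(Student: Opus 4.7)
The approach is induction on $N$, with $N \in \{0,1\}$ trivial since $F_N \equiv 1$. For $N \geq 2$, I first observe that $\{\re(\beta) > -2/N\} \subseteq \{\re(\beta) > -2/k\} \cap \{\re(\beta) > -2/(N-k)\}$ for each $1 \leq k \leq N-1$ (since $k, N-k < N$ gives $-2/k, -2/(N-k) < -2/N$), so the inductive hypotheses on $F_k$ and $F_{N-k}$ apply throughout the region where $F_N$ must be analyzed.

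For part (a), fix $t \in \R$ and abbreviate
$$A_k(\beta) := \left(N + \binom{N}{2}\beta\right)\left(1 - \tfrac{2k}{N}\right) + 1, \qquad B(\beta) := \left(N + \binom{N}{2}\beta\right) - 1 = \binom{N}{2}\left(\beta + \tfrac{2}{N}\right).$$
When $t \neq 0$, both $\sinh(\tfrac{t}{2}A_k(\beta))$ and $\sinh(\tfrac{t}{2}B(\beta))$ are entire in $\beta$; the denominator vanishes only when $\tfrac{t}{2}B(\beta) \in i\pi\Z$, which forces $\re B(\beta) = 0$, i.e., $\re(\beta) = -2/N$---a value excluded from the open domain. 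When $t = 0$, the prefactor reduces to $A_k(\beta)/B(\beta)$, whose only pole in $\beta$ is again at $-2/N$. Holomorphicity of $F_N(t,\cdot)$ on $\{\re(\beta) > -2/N\}$ then follows from the inductive holomorphicity of $F_k$ and $F_{N-k}$ together with the finite sum/product structure of the recurrence.

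For part (b), fix $\beta$ with $\re(\beta) > -2/N$, so $\re B(\beta) > 0$. The key step is to rewrite the $\sinh$-ratio via the entire function $\phi(z) := \sinh(z)/z$ (with $\phi(0) := 1$):
$$\frac{\sinh(\tfrac{t}{2}A_k(\beta))}{\sinh(\tfrac{t}{2}B(\beta))} = \frac{A_k(\beta)}{B(\beta)} \cdot \frac{\phi(\tfrac{t}{2}A_k(\beta))}{\phi(\tfrac{t}{2}B(\beta))}.$$
The zeros of $\phi$ lie on the punctured imaginary axis, while $\tfrac{t}{2}B(\beta)$ has nonzero real part for every $t \in \R \setminus \{0\}$ and equals $0$ at $t = 0$ (where $\phi = 1$); hence $\phi(\tfrac{t}{2}B(\beta)) \neq 0$ for all real $t$, and the prefactor extends to a real-analytic function of $t \in \R$ with value $A_k(\beta)/B(\beta)$ at $t = 0$---exactly the specialization prescribed in the recurrence. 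Combined with inductive smoothness of $F_k$ and $F_{N-k}$ in $t$, this gives smoothness of $F_N(\cdot,\beta)$ on $\R$. Evenness is immediate: each $\sinh$-ratio is even in $t$ since $\sinh$ is odd (the signs in numerator and denominator cancel under $t \mapsto -t$), and $F_k, F_{N-k}$ are even by induction.

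The main obstacle is the pair of non-vanishing claims above, which must be verified over complex $\beta$ rather than merely real $\beta$. Both reduce to leveraging the strict inequality $\re B(\beta) > 0$ on the open domain $\re(\beta) > -2/N$; once this is used to rule out $\sinh$-zeros in the denominator in part (a) and $\phi$-zeros in the denominator in part (b), the induction closes and both assertions follow.
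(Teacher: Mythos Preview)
Your argument is correct and follows exactly the approach the paper indicates: the paper simply states that ``both parts of the Quadratic Recurrence are straightforward to verify by induction'' without providing details, and your induction---checking that the denominator $\sinh(\tfrac{t}{2}B(\beta))$ (respectively $B(\beta)$) avoids its zeros on the half-plane $\re(\beta)>-2/N$, and using $\phi(z)=\sinh(z)/z$ to glue the $t=0$ and $t\neq0$ cases smoothly---is precisely what that verification amounts to.
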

Both parts of the Quadratic Recurrence are straightforward to verify by induction. An interesting and immediate consequence of The Quadratic Recurrence and the preceding discussion is the formula $$Z_N(R,\beta)=N!q^{\frac{1}{2}\binom{N}{2}\beta}F_N(\log(q),\beta)~.$$ It offers a computationally efficient alternative to the ``univariate case" of  \Cref{R_prop} (when $s_{ij}=\beta$ for all $i<j$) and extends $Z_N(R,\beta)$ to a smooth function of $q\in(0,\infty)$. Moreover, it transforms nicely under the involution $q\mapsto q^{-1}$: $$Z_N(R,\beta)\big|_{q\mapsto q^{-1}}=N!q^{-\frac{1}{2}\binom{N}{2}\beta}F_N\left(\log(q^{-1}),\beta\right)=N!q^{-\frac{1}{2}\binom{N}{2}\beta}F_N\left(\log(q),\beta\right)=q^{-\binom{N}{2}\beta}Z_N(R,\beta)~.$$

The Quadratic Recurrence serves the projective analogue as well. Expanding \eqref{P(K)=RP} into powers of $f$ yields the coefficient equations
\begin{equation}\label{Z_N/N!}
\frac{Z_N(\P^1(K),\beta)}{N!}=\sum_{k=0}^N\left(\frac{q}{q+1}\right)^N\frac{Z_{N-k}(R,\beta)}{(N-k)!}\frac{Z_k(P,\beta)}{k!}\qquad\text{for all }N\geq 0,
\end{equation}
and the identities $Z_j(P,\beta)=q^{-j-\binom{j}{2}\beta}Z_j(R,\beta)$ and $Z_j(R,\beta)=j!q^{\frac{1}{2}\binom{j}{2}\beta}F_j(\log(q),\beta)$ allow the $k$th summand to be rewritten as $$\left(\frac{q}{q+1}\right)^N\frac{Z_{N-k}(R,\beta)}{(N-k)!}\frac{Z_k(P,\beta)}{k!}=\frac{q^{\frac{1}{2}\left(N+\binom{N}{2}\beta\right)\left(1-\frac{2k}{N}\right)}}{\left(2\cosh\left(\frac{\log(q)}{2}\right)\right)^N}\cdot F_{N-k}(\log(q),\beta)\cdot F_k(\log(q),\beta)\qquad\text{for $N\geq 1$}.$$ Thus, adding two copies of the sum in \eqref{Z_N/N!} together, pairing the $k$th term of the first copy with the $(N-k)$th term of the second copy, and dividing by $2$ gives $$\frac{Z_N(\P^1(K),\beta)}{N!}=\sum_{k=0}^N\frac{\cosh\left(\frac{\log(q)}{2}\left(N+\binom{N}{2}\beta\right)\left(1-\frac{2k}{N}\right)\right)}{\left(2\cosh\left(\frac{\log(q)}{2}\right)\right)^N}\cdot F_{N-k}(\log(q),\beta)\cdot F_k(\log(q),\beta)\qquad\text{for $N\geq 1$},$$ which is valid for $\re(\beta)>-2/N$. Through this formula, $Z_N(\P^1(K),\beta)$ clearly extends to a smooth function of $q\in(0,\infty)$ and is invariant under the involution $q\mapsto q^{-1}$. We conclude this section by summarizing these observations:

\begin{theorem}[Efficient Formulas and Functional Equations]
Suppose $N\geq 2$ and $\re(\beta)>-2/N$, and define $(F_k(t,\beta))_{k=0}^N$ as in \Cref{quad_rec}. The $N$th canonical partition functions are given by the formulas
\begin{align*}
Z_N(R,\beta)&=N!q^{\frac{1}{2}\binom{N}{2}\beta}F_N(\log(q),\beta)\qquad\text{and}\\ \\
Z_N(\P(K),\beta)&=N!\sum_{k=0}^N\frac{\cosh\left(\frac{\log(q)}{2}\left(N+\binom{N}{2}\beta\right)\left(1-\frac{2k}{N}\right)\right)}{\left(2\cosh\left(\frac{\log(q)}{2}\right)\right)^N}\cdot F_{N-k}(\log(q),\beta)\cdot F_k(\log(q),\beta)~,
\end{align*}
which extend $Z_N(R,\beta)$ and $Z_N(\P^1(K),\beta)$ to smooth functions of $q\in(0,\infty)$ satisfying $$Z_N(R,\beta)\big|_{q\mapsto q^{-1}}=q^{-\binom{N}{2}\beta}Z_N(R,\beta)\qquad\text{and}\qquad Z_N(\P^1(K),\beta)\big|_{q\mapsto q^{-1}}=Z_N(\P^1(K),\beta)~.$$
\end{theorem}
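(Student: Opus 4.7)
The plan is to assemble ingredients already developed in \Cref{2_3}: the two Power Laws, the rescaling identity $Z_j(P,\beta)=q^{-j-\binom{j}{2}\beta}Z_j(R,\beta)$ (which follows from \Cref{main_Z_def} via the substitution $x_i=\pi y_i$), and the Quadratic Recurrence (\Cref{quad_rec}). The argument splits naturally into three stages.

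Stage one establishes the formula for $Z_N(R,\beta)$. Starting from the $q$th Power Law (\Cref{Chris}), I would apply $Z(f,P,\beta)\cdot\partial_f$ to both sides of $Z(f,R,\beta)=Z(f,P,\beta)^q$, compare coefficients of $f^{N-1}/(N-1)!$ to obtain equation \eqref{coeff}, then eliminate every $Z_j(P,\beta)$ using the rescaling identity. A careful regrouping of the resulting powers of $q$ and factorials---precisely the manipulation sketched before \Cref{quad_rec}---shows that the normalized quantity $Z_N(R,\beta)/\bigl(N!\,q^{\binom{N}{2}\beta/2}\bigr)$ satisfies the same recurrence (and base values at $N=0,1$) as $F_N(\log q,\beta)$. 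Induction on $N$ then yields the first formula.

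Stage two establishes the $\P^1(K)$ formula. I would expand the identity \eqref{P(K)=RP}---which follows from \Cref{main2} after substituting $f \mapsto qf/(q+1)$ into the $q$th Power Law and multiplying---into coefficients of $f^N$ to obtain \eqref{Z_N/N!}. Plugging the stage-one formula and the rescaling identity into the $k$th summand produces a term proportional to $q^{\frac{1}{2}(N+\binom{N}{2}\beta)(1-2k/N)}/(2\cosh(\log(q)/2))^N$ times $F_{N-k}(\log q,\beta)F_k(\log q,\beta)$. Because the $F$-product is symmetric under $k\leftrightarrow N-k$ while the $q$-exponent merely changes sign, averaging the sum with its reversal converts the pure power $q^{\pm a}$ into $2\cosh(a\log q)$, producing the claimed $\cosh$-form.

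Stage three reads off smoothness and the functional equations. Smoothness of both formulas in $q\in(0,\infty)$ is immediate from \Cref{quad_rec}(b), since every piece of $q$-dependence beyond smooth explicit factors passes through $t=\log q$. For the functional equations, $q\mapsto q^{-1}$ corresponds to $t\mapsto -t$, and the evenness half of \Cref{quad_rec}(b) leaves each $F_j(\log q,\beta)$ invariant. For $Z_N(R,\beta)$ the only non-invariant factor is the prefactor $q^{\binom{N}{2}\beta/2}$, which picks up an overall factor of $q^{-\binom{N}{2}\beta}$; for $Z_N(\P^1(K),\beta)$ the $\cosh$ numerator and the denominator $(2\cosh(\log(q)/2))^N$ are both manifestly even in $\log q$, so the whole sum is invariant. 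The main obstacle is the bookkeeping in stage one: extracting the precise $\sinh$-ratio structure from the mixture of factorials, binomial exponents, and powers of $q$ requires patience, but once that identification is made, the remaining stages amount to substitution and symmetrization.
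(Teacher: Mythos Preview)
Your proposal is correct and follows essentially the same approach as the paper: the theorem is presented there as a summary of the discussion in \Cref{2_3}, and that discussion proceeds exactly through your three stages---deriving \eqref{coeff} from the $q$th Power Law and matching the resulting recurrence to $F_N(\log q,\beta)$, expanding \eqref{P(K)=RP} into \eqref{Z_N/N!} and symmetrizing $k\leftrightarrow N-k$ to produce the $\cosh$ form, and then reading off the functional equations from the evenness in \Cref{quad_rec}(b).
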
     

It should be noted here that the $q\mapsto q^{-1}$ functional equation at left is a special case of the one proved in \cite{DenMeus}, and that both functional equations closely resemble the ones in \cite{Voll10}.

\newpage
\section{Proofs of the main results}\label{3}
This section will establish the proofs of \Cref{main1,main2}. The common step in both is a decomposition of $(\P^1(K))^N$ into $(q+1)^N$ cells that are isometrically isomorphic to $P^N$, which combines with the metric and measure properties in \Cref{1_2} to create the key relationship between the canonical partition functions for $X=\P^1(K)$ and $X=P$. We will prove this relationship first, then conclude the proofs of \Cref{main1,main2} in their own subsections.   

\subsection{Decomposing the integral over $(\P^1(K))^N$}
We begin with an integer $N\geq 2$ that shall remain fixed for the rest of this section, reserve the symbol $\bm{s}$ for a complex tuple $(s_{ij})_{1\leq i<j\leq N}$, and fix the following notation to better organize the forthcoming arguments:
\begin{notation}
Let $I$ be a subset of $[N]=\{1,\dots,N\}$.
\begin{itemize}
\item For any set $X$ we write $X^I$ for the product $\prod_{i\in I}X=\{x_I=(x_i)_{i\in I}:x_i\in X\}$ and assume $X^I$ has the product topology if $X$ is a topological space.
\item We write $\mu^I$ for the product Haar measure on $K^I$ satisfying $\mu^I(R^I)=1$, and we make this consistent for $I=\varnothing$ by giving the singleton space $K^\varnothing=R^\varnothing=\{0\}$ measure 1. We also write $\nu^I$ for the product measure on $(\P^1(K))^I$, with the same convention for $I=\varnothing$.
\item For a measurable subset $X\subset K$ we set $\mathcal{Z}_\varnothing(X,\bm{s}):=1$ and $$\mathcal{Z}_I(X,\bm{s}):=\int_{X^I}\prod_{\substack{i<j\\i,j\in I}}|x_i-x_j|^{s_{ij}}\,d\mu^I\qquad\text{if}\quad I\neq\varnothing~.$$ Note that $\mathcal{Z}_I(X,\bm{s})$ is constant with respect to the entry $s_{ij}$ if $i\in[N]\setminus I$ or $j\in[N]\setminus I$, and it is equal to $\mathcal{Z}_N(X,\bm{s})$ if $I=[N]$.
\item Using the constant $q=\#(R/P)$, we write $(I_0,\dots,I_q)\vdash[N]$ for an \emph{ordered} partition of $[N]$ into at most $q+1$ parts. That is, $(I_0,\dots,I_q)\vdash[N]$ means $I_0,\dots,I_q$ are $q+1$ disjoint ordered subsets of $[N]$ with union equal to $[N]$, where some $I_k$ may be empty.
\end{itemize}
\end{notation}

In addition to the above, it will be useful to consider $I$-analogues of splitting chains:

\begin{definition}\label{Isplchdef}
Suppose $I\subset[N]$. An \emph{$I$-splitting chain} of length $L\geq 0$ is a tuple $\spl=(\ptn_0,\dots,\ptn_L)$ of partitions of $I$ satisfying $$\{I\}=\ptn_0>\ptn_1>\ptn_2>\dots>\ptn_L=\{\{i\}:i\in I\}~.$$ If $\#I\geq 2$, we define $\mathcal{B}(\spl)$, $\ell_\spl(\lambda)$, and $\deg_\spl(\lambda)\in\{2,3,\dots,\#I\}$ just as in \Cref{splchdef}. Otherwise $\mathcal{B}(\spl)$ will be treated as the empty set and there is no need to define $\ell_\spl$ or $\deg_\spl$. Finally, we call an $I$-splitting chain $\spl$ \emph{reduced} if each $\lambda\in\mathcal{B}(\spl)$ satisfies $\lambda\in\ptn_\ell\iff\ell=\ell_\spl(\lambda)$, write $\mathcal{R}_I$ for the set of reduced $I$-splitting chains, and define $$\Omega_I:=\bigcap_{\substack{\lambda\subset I\\\#\lambda>1}}\left\{\bm{s}:\re(e_\lambda(\bm{s}))>0\right\}\qquad\text{where}\qquad e_\lambda(\bm{s}):=\#\lambda-1+\sum_{\substack{i<j\\i,j\in\lambda}}s_{ij}~.$$ 
\end{definition}

Note that $\mathcal{R}_\varnothing=\varnothing$ because $I=\varnothing$ has no partitions, $\Omega_\varnothing=\C^{N(N-1)/2}$ because $\Omega_\varnothing$ is an intersection of subsets of $\C^{N(N-1)/2}$ taken over an empty index set, and $e_\varnothing(\bm{s})=-1$ for a similar reason. For each singleton $\{i\}$, the set $\mathcal{R}_{\{i\}}$ is comprised of a single splitting chain of length zero, $\Omega_{\{i\}}=\C^{N(N-1)/2}$ for the same reason as the $I=\varnothing$ case, and similarly $e_{\{i\}}(\bm{s})=0$. At the other extreme, taking $I=[N]$ in \Cref{Isplchdef} recovers \Cref{splchdef} and gives $\Omega_I=\Omega_N$.

\begin{proposition}\label{Iold}
For any $v\in\Z$ and any nonempty subset $I\subset[N]$, the integral $\mathcal{Z}_I(\pi^vR,\bm{s})$ converges absolutely if and only if $\bm{s}\in\Omega_I$, and in this case
$$\mathcal{Z}_I(\pi^vR,\bm{s})=\frac{1}{q^{(v-1)(e_I(\bm{s})+1)+\#I}}\sum_{\spl\in\mathcal{R}_I}\prod_{\lambda\in\mathcal{B}(\spl)}\frac{(q-1)_{\deg_{\spl}(\lambda)-1}}{q^{e_\lambda(\bm{s})}-1}~.$$ In particular, we recover \Cref{R_prop} by taking $I=[N]$ and $v=0$. 
\end{proposition}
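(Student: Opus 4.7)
The plan is to reduce \Cref{Iold} to \Cref{R_prop} by two independent transformations: a scaling that absorbs the dependence on $v$, and a relabeling that absorbs the dependence on $I$.

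First I would perform the change of variables $x_i = \pi^v y_i$ for each $i\in I$, which defines a bijective, bi-measurable map $R^I\to(\pi^vR)^I$. Because the Haar measure satisfies $\mu(\pi^vM)=q^{-v}\mu(M)$, we have $d\mu^I(x) = q^{-v\#I}\,d\mu^I(y)$. Because $|x_i-x_j| = |\pi^v|\cdot|y_i-y_j| = q^{-v}|y_i-y_j|$, the integrand picks up the factor $q^{-v\sum_{i<j,\,i,j\in I}s_{ij}}$. Combining these and using the identity $\#I+\sum_{i<j,\,i,j\in I}s_{ij} = e_I(\bm{s})+1$ gives
\begin{equation*}
\mathcal{Z}_I(\pi^vR,\bm{s}) = q^{-v(e_I(\bm{s})+1)}\,\mathcal{Z}_I(R,\bm{s})~,
\end{equation*}
so absolute convergence of the left side is equivalent to that of the right, and the formula for one determines the formula for the other. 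This reduces the problem to the case $v=0$.

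Next I would reduce the case $v=0$, arbitrary $I$, to the case $I=[N]$ handled by \Cref{R_prop}. Choose any order-preserving bijection $\sigma\colon I\to[\#I]$; since the integrand of $\mathcal{Z}_I(R,\bm{s})$ only involves the entries $s_{ij}$ with $i,j\in I$, relabeling by $\sigma$ gives $\mathcal{Z}_I(R,\bm{s}) = \mathcal{Z}_{\#I}(R,\bm{s}')$ for the appropriate sub-tuple $\bm{s}'=(s_{\sigma^{-1}(i)\sigma^{-1}(j)})$. The same $\sigma$ induces a bijection between reduced $I$-splitting chains and reduced splitting chains of $[\#I]$ (as in \Cref{Isplchdef} and \Cref{splchdef}); under this bijection the branch set, degree, and edge function $e_\lambda$ all transform in the obvious way, and the convergence region $\Omega_I$ corresponds to $\Omega_{\#I}$. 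Applying \Cref{R_prop} to $\mathcal{Z}_{\#I}(R,\bm{s}')$ therefore yields
\begin{equation*}
\mathcal{Z}_I(R,\bm{s}) = q^{\,e_I(\bm{s})+1-\#I}\sum_{\spl\in\mathcal{R}_I}\prod_{\lambda\in\mathcal{B}(\spl)}\frac{(q-1)_{\deg_\spl(\lambda)-1}}{q^{e_\lambda(\bm{s})}-1}~,
\end{equation*}
using $\sum_{i<j,\,i,j\in I}s_{ij} = e_I(\bm{s})+1-\#I$.

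Combining the two steps gives $\mathcal{Z}_I(\pi^vR,\bm{s}) = q^{(1-v)(e_I(\bm{s})+1)-\#I}\sum_\spl\prod_\lambda(\dots)$, which matches the claim after rewriting the exponent as $-(v-1)(e_I(\bm{s})+1)-\#I$. The expected obstacle is not really analytic but bookkeeping: verifying that the bijection induced by $\sigma$ genuinely carries $\mathcal{R}_{\#I}$ onto $\mathcal{R}_I$ while preserving $\mathcal{B}$, $\deg_\spl$, and $e_\lambda$, and similarly for $\Omega_{\#I}\leftrightarrow\Omega_I$. Once this correspondence is set up, both the convergence criterion and the closed-form expression transport directly from \Cref{R_prop}, and the scaling identity from step one completes the proof for arbitrary $v\in\Z$.
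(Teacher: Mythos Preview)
Your proposal is correct and follows essentially the same approach as the paper: relabel $I$ as $[\#I]$ to invoke \Cref{R_prop}, and use the homothety $x_i\mapsto\pi^v x_i$ to absorb the dependence on $v$ (the paper merely performs these two reductions in the opposite order). The one point the paper handles that you gloss over is the singleton case $\#I=1$, where \Cref{R_prop} does not apply; there the paper checks directly that both sides equal $q^{-v}$.
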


\begin{proof} 
First suppose $I$ is a singleton, so that the product inside the integral $Z_I(\pi^vR,\bm{s})$ is empty and hence $$Z_I(\pi^vR,\bm{s})=\int_{(\pi^vR)^I}\,d\mu^I=\int_{\pi^vR}\,d\mu=q^{-v}~.$$ This integral is constant, and hence absolutely convergent, for all $\bm{s}\in\C^{N(N-1)/2}=\Omega_I$. On the other hand, $\mathcal{R}_I$ consists of a single $I$-splitting chain, namely the one-tuple $\spl=(\{I\})$. Then $\mathcal{B}(\spl)=\varnothing$ and $e_I(\bm{s})=0$ imply $$\frac{1}{q^{(v-1)(e_I(\bm{s})+1)+\#I}}\sum_{\spl\in\mathcal{R}_I}\prod_{\lambda\in\mathcal{B}(\spl)}\frac{(q-1)_{\deg_{\spl}(\lambda)-1}}{q^{e_\lambda(\bm{s})}-1}=\frac{1}{q^{(v-1)\cdot1+1}}\prod_{\lambda\in\varnothing}\frac{(q-1)_{\deg_{\spl}(\lambda)-1}}{q^{e_\lambda(\bm{s})}-1}=q^{-v}$$ as well, so the claim holds for any singleton subset $I\subset[N]$. Now suppose $I$ is not a singleton. By relabeling $I$ we may assume $I=[n]$ where $2\leq n\leq N$. By Proposition 3.15 and Lemma 3.16(c) in \cite{Web20}, the integral $$\mathcal{Z}_I(R,\bm{s})=\mathcal{Z}_n(R,\bm{s})=\int_{R^n}\prod_{1\leq i<j\leq n}|x_i-x_j|^{s_{ij}}\,d\mu^N$$ converges absolutely if and only if $\bm{s}$ belongs to the intersection
\begin{equation}\label{polytope}
\bigcap_{\spl\in\mathcal{R}_n}\bigcap_{\lambda\in\mathcal{B}(\spl)}\left\{\bm{s}:\re(e_{\lambda}(\bm{s}))>0\right\}~,
\end{equation}
and for such $\bm{s}$ we have $$\mathcal{Z}_n(R,\bm{s})=q^{e_{[n]}(\bm{s})+1-n}\sum_{\spl\in\mathcal{R}_n}\prod_{\lambda\in\mathcal{B}(\spl)}\frac{(q-1)_{\deg_{\spl}(\lambda)-1}}{q^{e_\lambda(\bm{s})}-1}~.$$ Changing variables in the integral $\mathcal{Z}_n(R,\bm{s})$ by the homothety $R^n\to(\pi^vR)^n$ gives $$\mathcal{Z}_N(\pi^vR,\bm{s})=q^{-v(e_{[N]}(\bm{s})+1)}\cdot\mathcal{Z}_n(R,\bm{s})=\frac{1}{q^{(v-1)(e_{[n]}(\bm{s})+1)+n}}\sum_{\spl\in\mathcal{R}_n}\prod_{\lambda\in\mathcal{B}(\spl)}\frac{(q-1)_{\deg_{\spl}(\lambda)-1}}{q^{e_\lambda(\bm{s})}-1}~,$$ and the first equality implies that the domain of absolute convergence for $\mathcal{Z}_n(\pi^vR,\bm{s})$ is also the intersection appearing in \eqref{polytope}. But every subset $\lambda\subset[n]$ with $\#\lambda>1$ appears as a branch in at least one reduced splitting chain of order $n$, so the intersection in \eqref{polytope} is precisely $\Omega_{[n]}$. Therefore the claim holds for $I=[n]$, and hence for any non-singleton subset $I\subset[N]$.\\ 
\end{proof}

The $v=1$ case of \Cref{Iold} has an important relationship with the main result of this section, which is the following theorem.

\begin{theorem}\label{main}
For each $N\geq 2$, the integral $\mathcal{Z}_N(\P^1(K),\bm{s})$ converges absolutely if and only if $\bm{s}\in\Omega_N$, and in this case $$\mathcal{Z}_N(\P^1(K),\bm{s})=\left(\frac{q}{q+1}\right)^N\sum_{(I_0,\dots,I_q)\vdash[N]}\prod_{k=0}^q\mathcal{Z}_{I_k}(P,\bm{s})~.$$
\end{theorem}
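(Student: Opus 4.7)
The plan is to decompose $(\P^1(K))^N$ according to the partition of $\P^1(K)$ into $q+1$ balls of radius $q^{-1}$ given in \eqref{P(K)_decomp}. For each ordered partition $(I_0,\dots,I_q)\vdash[N]$, let $C_{(I_0,\dots,I_q)}$ denote the cell of all $(x_1,\dots,x_N)\in(\P^1(K))^N$ with $x_i\in B_1[\text{pt}_k]$ whenever $i\in I_k$, where $B_1[\text{pt}_0],\dots,B_1[\text{pt}_q]$ are the $q+1$ balls appearing in \eqref{P(K)_decomp}. These $(q+1)^N$ cells form a Borel partition of $(\P^1(K))^N$, so $\mathcal{Z}_N(\P^1(K),\bm{s})$ splits into a finite sum of integrals, one per cell.

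Within a fixed cell, the remark following \eqref{P(K)_decomp} gives $\delta(x_i,x_j)=1$ whenever $i$ and $j$ lie in distinct parts $I_k$ and $I_{k'}$, so those factors in the integrand are trivial. By Fubini, the cell integral factors as $\prod_{k=0}^q\int_{B_1[\text{pt}_k]^{I_k}}\prod_{\substack{i<j\\i,j\in I_k}}\delta(x_i,x_j)^{s_{ij}}\,d\nu^{I_k}$. The next step identifies each factor with $\left(\frac{q}{q+1}\right)^{\#I_k}\mathcal{Z}_{I_k}(P,\bm{s})$ by transferring the integral from $B_1[\text{pt}_k]$ to $P$. For $B_1[0:1]=\iota(P)$ this is direct: \eqref{ball_rule} and \eqref{metric_rule} show that $\iota$ restricts to an isometry between $P$ and $B_1[0:1]$, and \eqref{measure_rule} shows $\nu$ pulls back to $\frac{q}{q+1}\mu$ on $P$. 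For each of the remaining $q$ balls, transitivity of $PGL_2(R)$ on balls of radius $q^{-1}$ and its preservation of $\delta$ and $\nu$ (\Cref{invariance}) supply the same identification after precomposing with a suitable $PGL_2(R)$ element. Summing the resulting contributions over the $(q+1)^N$ ordered partitions and extracting the common factor $\left(\frac{q}{q+1}\right)^N$ yields the stated formula.

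For the absolute convergence claim, the same decomposition applied to the nonnegative integrand $\prod\delta(x_i,x_j)^{\re(s_{ij})}$ shows that $\mathcal{Z}_N(\P^1(K),\bm{s})$ converges absolutely if and only if $\mathcal{Z}_I(P,\bm{s})$ converges absolutely for every $I\subset[N]$ arising in some partition — in particular for $I=[N]$, whose absolute convergence requires $\bm{s}\in\Omega_N$ by the $v=1$ case of \Cref{Iold}. Conversely, $\bm{s}\in\Omega_N$ implies $\bm{s}\in\Omega_I$ for every $I\subset[N]$ (since $\Omega_I$ imposes a subset of the conditions defining $\Omega_N$), so \Cref{Iold} guarantees absolute convergence of every $\mathcal{Z}_{I_k}(P,\bm{s})$ appearing in the sum. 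The main technical point is the uniform identification of each of the $q+1$ balls with $P$, especially the ``pole'' $B_1[1:0]$ which is not the $\iota$-image of any ball in $K$; invoking $PGL_2(R)$-invariance rather than attempting a direct coordinate calculation is the cleanest way to handle this uniformly.
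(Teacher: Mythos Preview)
Your proposal is correct and follows essentially the same approach as the paper: decompose $(\P^1(K))^N$ into cells indexed by ordered partitions $(I_0,\dots,I_q)\vdash[N]$ using \eqref{P(K)_decomp}, use $\delta=1$ across distinct balls together with Fubini to factor each cell integral, and then identify each factor with $\left(\tfrac{q}{q+1}\right)^{\#I_k}\mathcal{Z}_{I_k}(P,\bm{s})$ via $PGL_2(R)$-invariance (\Cref{invariance}) and the isometry $\iota:P\to B_1[0:1]$, invoking \Cref{Iold} for the absolute convergence criterion. The paper additionally writes down explicit representatives $\phi_k\in PGL_2(R)$ carrying $B_1[0:1]$ to each of the $q+1$ balls, but this is only a presentational detail.
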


\begin{proof}
The partition of $\P^1(K)$ in \eqref{P(K)_decomp} can be rewritten in the form $$\P^1(K)=\bigsqcup_{k=0}^q\phi_k(B_1[0:1])~,$$ where $\phi_k\in PGL_2(R)$ is the element represented by $\left(\begin{smallmatrix}1&0\\0&1\end{smallmatrix}\right)$ if $k=0$, $\left(\begin{smallmatrix}1&\xi^{k-1}\\0&1\end{smallmatrix}\right)$ if $0<k<q$, or $\left(\begin{smallmatrix}0&1\\1&0\end{smallmatrix}\right)$ if $k=q$. This leads to a partition of the $N$-fold product, $$(\P^1(K))^N=\bigsqcup_{(I_0,\dots,I_q)\vdash[N]}C(I_0,\dots,I_q)~,$$ where each part is a ``cell" of the form
\begin{align*}
C(I_0,\dots,I_q)&:=\{([x_{1,0}:x_{1,1}],\dots,[x_{N,0}:x_{N,1}])\in(\P^1(K))^N:[x_{i,0}:x_{i,1}]\in\phi_k(B_1[0:1])\iff i\in I_k\}\\
&=\prod_{k=0}^q(\phi_k(B_1[0:1]))^{I_k}~.
\end{align*}
Accordingly, the integral $\mathcal{Z}_N(\P^1(K),\bm{s})$ breaks into a sum of integrals of the form
\begin{equation}\label{cube_integral}
\int_{C(I_0,\dots,I_q)}\prod_{1\leq i<j\leq N}\delta([x_{i,0}:x_{i,1}],[x_{j,0}:x_{j,1}])^{s_{ij}}\,d\nu^N~,
\end{equation}
summed over all $(I_0,\dots,I_q)\vdash[N]$. Since every cell $C(I_0,\dots,I_q)$ has positive measure, the integral $\mathcal{Z}_N(\P^1(K),\bm{s})$ converges absolutely if and only if the integral in \eqref{cube_integral} converges absolutely for all $(I_0,\dots,I_q)\vdash[N]$. Fix one $(I_0,\dots,I_q)$ for the moment. By \eqref{P(K)_decomp} and the definition of the $\phi_k$'s above, note that the entries of each tuple $([x_{1,0}:x_{1,1}],\dots,[x_{N,0}:x_{N,1}])\in C(I_0,\dots,I_q)$ satisfy $\delta([x_{i,0}:x_{i,1}],[x_{j,0}:x_{j,1}])^{s_{ij}}=1$ if and only if $i$ and $j$ belong to different parts of $(I_0,\dots,I_q)$. Therefore the integrand in \eqref{cube_integral} factors as
\begin{align*}
\prod_{1\leq i<j\leq N}\delta([x_{i,0}:x_{i,1}],[x_{j,0}:x_{j,1}])^{s_{ij}}&=\prod_{k=0}^q\prod_{\substack{i<j\\i,j\in I_k}}\delta([x_{i,0}:x_{i,1}],[x_{j,0}:x_{j,1}])^{s_{ij}}~,
\end{align*}
and the measure on $C(I_0,\dots,I_q)$ factors in a similar way, namely $\prod_{k=0}^qd\nu^{I_k}$ where $\nu^{I_k}$ is the product measure on $(\P^1(K))^{I_k}$. Now Fubini's Theorem for positive functions and $PGL_2(R)$-invariance give
\begin{align*}
\int_{C(I_0,\dots,I_q)}&\Bigg|\prod_{1\leq i<j\leq N}\delta([x_{i,0}:x_{i,1}],[x_{j,0}:x_{j,1}])^{s_{ij}}\Bigg|_\C\,d\nu^N\\
&=\prod_{k=0}^q\int_{(\phi_k(B_1[0:1]))^{I_k}}\Bigg|\prod_{\substack{i<j\\i,j\in I_k}}\delta([x_{i,0}:x_{i,1}],[x_{j,0}:x_{j,1}])^{s_{ij}}\Bigg|_\C\,d\nu^{I_k}\\
&=\prod_{k=0}^q\int_{(B_1[0:1])^{I_k}}\Bigg|\prod_{\substack{i<j\\i,j\in I_k}}\delta([x_{i,0}:x_{i,1}],[x_{j,0}:x_{j,1}])^{s_{ij}}\Bigg|_\C\,d\nu^{I_k}~,
\end{align*}
so the integral in \eqref{cube_integral} converges absolutely if and only if all $q+1$ of the integrals of the form
\begin{equation}\label{I_k_integral}
\int_{(B_1[0:1])^{I_k}}\prod_{\substack{i<j\\i,j\in I_k}}\delta([x_{i,0}:x_{i,1}],[x_{j,0}:x_{j,1}])^{s_{ij}}\,d\nu^{I_k}
\end{equation}
converge absolutely. The change of variables $P^{I_k}\to(B_1[0:1])^{I_k}$ given by $\iota:P\to B_1[0:1]$ in each coordinate, along with \eqref{metric_rule}, \eqref{ball_rule}, and \eqref{measure_rule}, allows the integral in \eqref{I_k_integral} to be rewritten as $(\frac{q}{q+1})^{\#I_k}\mathcal{Z}_{I_k}(P,\bm{s})$, and thus \Cref{Iold} implies that it converges absolutely if and only if $\bm{s}\in\Omega_{I_k}$. Therefore the integral over $C(I_0,\dots,I_q)$ in \eqref{cube_integral} converges absolutely if and only if $\bm{s}\in\Omega_{I_0}\cap\dots\cap\Omega_{I_q}$, and in this case Fubini's Theorem for absolutely integrable functions, $PGL_2(R)$-invariance, and the change of variables above allow it to be rewritten as
\begin{align*}
\int_{C(I_0,\dots,I_q)}&\prod_{1\leq i<j\leq N}\delta([x_{i,0}:x_{i,1}],[x_{j,0}:x_{j,1}])^{s_{ij}}\,d\nu^N\\
&=\prod_{k=0}^q\int_{(B_1[0:1])^{I_k}}\prod_{\substack{i<j\\i,j\in I_k}}\delta([x_{i,0}:x_{i,1}],[x_{j,0}:x_{j,1}])^{s_{ij}}\,d\nu^{I_k}\\
&=\prod_{k=0}^q\left(\frac{q}{q+1}\right)^{\#I_k}\mathcal{Z}_{I_k}(P,\bm{s})\\
&=\left(\frac{q}{q+1}\right)^N\prod_{k=0}^q\mathcal{Z}_{I_k}(P,\bm{s})~.
\end{align*}

Finally, since $\mathcal{Z}_N(\P^1(K),\bm{s})$ is the sum of these integrals over all $(I_0,\dots,I_q)\vdash[N]$, it converges absolutely if and only if $$\bm{s}\in\bigcap_{(I_1,\dots,I_q)\vdash[N]}\left(\Omega_{I_1}\cap\dots\cap\Omega_{I_q}\right)=\bigcap_{\substack{I\subset[N]\\\#I>1}}\Omega_I~.$$ The last equality of intersections holds because each subset $I\subset[N]$ with $\#I>1$ appears as a part in at least one of the ordered partitions $(I_1,\dots,I_q)\vdash[N]$, and because none of the parts with $\#I_k\leq 1$ affect the intersection (because $\Omega_{I_k}=\C^{N(N-1)/2}$ for such $I_k$). The intersection of $\Omega_I$ over all $I\subset[N]$ with $\#I>1$ is clearly equal to $\Omega_{[N]}=\Omega_N$ by \Cref{Isplchdef}, so the proof is complete.\\ 
\end{proof}

\subsection{Finishing the proof of Theorem 2.1}
\Cref{main} established that the integral $\mathcal{Z}_N(\P^1(K),\bm{s})$ converges absolutely if and only if $\bm{s}\in\Omega_N$, and for such $\bm{s}$ it gave
\begin{equation}\label{sum1}
\mathcal{Z}_N(\P^1(K),\bm{s})=\left(\frac{q}{q+1}\right)^N\sum_{(I_0,\dots,I_q)\vdash[N]}\prod_{k=0}^q\mathcal{Z}_{I_k}(P,\bm{s})~.
\end{equation}
It remains to show that the righthand sum can be converted into the sum over $\spl\in\mathcal{R}_N$ proposed in \Cref{main1}. 

\begin{proof}[Proof of \Cref{main1}] We begin by breaking the terms of the sum in \eqref{sum1} into two main groups. The simpler group is indexed by those $(I_0,\dots,I_q)$ with $I_j=[N]$ for some $j$ and $I_k=\varnothing$ for all $k\neq j$, in which case $\mathcal{Z}_{I_j}(P,\bm{s})=\mathcal{Z}_N(P,\bm{s})$ and $\mathcal{Z}_{I_k}(P,\bm{s})=1$ for all $k\neq j$. Therefore each of the group's $q+1$ terms (one for each $j\in\{0,\dots,q\}$) contributes the quantity $\prod_{k=0}^q\mathcal{Z}_{I_k}(P,\bm{s})=\mathcal{Z}_N(P,\bm{s})$ to the sum in \eqref{sum1} for a total contribution of
\begin{equation}\label{term1}
(q+1)\mathcal{Z}_N(P,\bm{s})=\frac{q+1}{q^N}\sum_{\spl\in\mathcal{R}_N}\prod_{\lambda\in\mathcal{B}(\spl)}\frac{(q-1)_{\deg_\spl(\lambda)-1}}{q^{e_\lambda(\bm{s})}-1}
\end{equation}
by the $v=1$ and $I=[N]$ case of \Cref{Iold}. The other group of terms is indexed by the ordered partitions $(I_0,\dots,I_q)\vdash[N]$ satisfying $I_0,\dots,I_q\subsetneq[N]$. To deal with them carefully, we fix one such $(I_0,\dots,I_q)$ for the moment, and note that the number $d$ of nonempty parts $I_k$ must be at least 2. Thus we have indices $k_1,\dots,k_d\in\{0,\dots,q\}$ with $I_{k_j}\neq\varnothing$, and for every $k\in\{0,\dots,q\}\setminus\{k_1,\dots,k_d\}$ we have $I_k=\varnothing$ and hence $\mathcal{Z}_{I_k}(P,\bm{s})=1$. For the nonempty sets $I_{k_j}$, \Cref{Iold} expands $\mathcal{Z}_{I_{k_j}}(P,\bm{s})$ as a sum over $\mathcal{R}_{I_{k_j}}$ (whose elements shall be denoted $\spl_j$ instead of $\spl$) and hence
\begin{align*}
\prod_{k=0}^q\mathcal{Z}_{I_k}(P,\bm{s})&=\prod_{j=1}^d\frac{1}{q^{\#I_{k_j}}}\sum_{\spl_j\in\mathcal{R}_{I_{k_j}}}\prod_{\lambda\in\mathcal{B}(\spl_j)}\frac{(q-1)_{\deg_{\spl_j}(\lambda)-1}}{q^{e_\lambda(\bm{s})}-1}\\
&=\frac{1}{q^N}\sum_{(\spl_1,\dots,\spl_d)\in\mathcal{R}_{I_{k_1}}\times\cdots\times\mathcal{R}_{I_{k_d}}}\prod_{j=1}^d\prod_{\lambda\in\mathcal{B}(\spl_j)}\frac{(q-1)_{\deg_{\spl_j}(\lambda)-1}}{q^{e_\lambda(\bm{s})}-1}\\
&=\frac{1}{q^N}\sum_{(\spl_1,\dots,\spl_d)\in\mathcal{R}_{I_{k_1}}\times\cdots\times\mathcal{R}_{I_{k_d}}}\prod_{\lambda\in\mathcal{B}(\spl_1)\sqcup\cdots\sqcup\mathcal{B}(\spl_d)}\frac{(q-1)_{\deg_{\spl_j}(\lambda)-1}}{q^{e_\lambda(\bm{s})}-1}~.
\end{align*}
We now make use of a simple correspondence between the tuples $(\spl_1,\dots,\spl_d)\in\mathcal{R}_{I_{k_1}}\times\cdots\times\mathcal{R}_{I_{k_d}}$ and the reduced splitting chains $\spl=(\ptn_0,\ptn_1,\dots,\ptn_L)\in\mathcal{R}_N$ satisfying $\ptn_1=\{I_{k_1},\dots,I_{k_d}\}$. To establish it, note that each $\spl\in\mathcal{R}_N$ corresponds uniquely to its branch set $\mathcal{B}(\spl)$ (Lemma 2.5(b) of \cite{Web20}), which generalizes in an obvious way to reduced $I$-splitting chains (for any nonempty $I\subset[N]$). Now if $\spl=(\ptn_0,\ptn_1,\dots,\ptn_L)\in\mathcal{R}_N$ satisfies $\ptn_1=\{I_{k_1},\dots,I_{k_d}\}$, the corresponding branch set $\mathcal{B}(\spl)$ decomposes as $$\mathcal{B}(\spl)=\{[N]\}\sqcup\bigsqcup_{j=1}^d\{\lambda\in\mathcal{B}(\spl):\lambda\subset I_{k_j}\}~.$$ Each of the sets $\{\lambda\in\mathcal{B}(\spl):\lambda\subset I_{k_j}\}$ is the branch set $\mathcal{B}(\spl_j)$ for a unique $\spl_j\in\mathcal{R}_{I_{k_j}}$, so in this sense $\spl$ ``breaks" into a unique tuple $(\spl_1,\dots,\spl_d)\in\mathcal{R}_{I_{k_1}}\times\cdots\times\mathcal{R}_{I_{k_d}}$. On the other hand, any tuple $(\spl_1,\dots,\spl_d)\in\mathcal{R}_{I_{k_1}}\times\cdots\times\mathcal{R}_{I_{k_d}}$ can be ``assembled" as follows. Since $\{I_{k_1},\dots,I_{k_d}\}$ is a partition of $[N]$, taking the union of the $d$ branch sets $\mathcal{B}(\spl_1),\dots,\mathcal{B}(\spl_d)$ and the singleton $\{[N]\}$ forms the branch set $\mathcal{B}(\spl)$ for a unique $\spl\in\mathcal{R}_N$. It is clear that ``breaking" and ``assembling" are inverses, giving a correspondence $\mathcal{R}_N\longleftrightarrow\mathcal{R}_{I_{k_1}}\times\cdots\times\mathcal{R}_{I_{k_d}}$ under which each identification $\spl\longleftrightarrow(\spl_1,\dots,\spl_d)$ amounts to a branch set equation, i.e., $$\mathcal{B}(\spl)\setminus\{[N]\}=\mathcal{B}(\spl_1)\sqcup\cdots\sqcup\mathcal{B}(\spl_d)~.$$ In particular, each $\lambda\in\mathcal{B}(\spl)\setminus\{[N]\}$ is contained in exactly one $\mathcal{B}(\spl_j)$, and $\deg_\spl(\lambda)=\deg_{\spl_j}(\lambda)$ by \Cref{splchdef} in this case. These facts allow the sum over $\mathcal{R}_{I_1}\times\cdots\times\mathcal{R}_{I_d}$ above to be rewritten as a sum over all $\spl\in\mathcal{R}_N$ with $\ptn_1=\{I_{k_1},\dots,I_{k_q}\}$, and each product over $\lambda\in\mathcal{B}(\spl_{k_1})\sqcup\cdots\sqcup\mathcal{B}(\spl_{k_d})$ inside it is simply a product over $\lambda\in\mathcal{B}(\spl)\setminus\{[N]\}$. We conclude that an ordered partition $(I_0,\dots,I_q)\vdash[N]$ with $I_0,\dots,I_q\subsetneq[N]$ contributes the quantity 
\begin{equation}\label{term2}
\prod_{k=0}^q\mathcal{Z}_{I_k}(P,\bm{s})=\frac{1}{q^N}\sum_{\substack{\spl\in\mathcal{R}_N\\\ptn_1=\{I_{k_1},\dots,I_{k_d}\}}}\prod_{\lambda\in\mathcal{B}(\spl)\setminus\{[N]\}}\frac{(q-1)_{\deg_\spl(\lambda)-1}}{q^{e_\lambda(\bm{s})}-1}
\end{equation}
to the sum in \eqref{sum1}, where $\{I_{k_1},\dots,I_{k_d}\}$ is the (unordered) subset of nonempty parts in that particular ordered partition. We must now total the contribution in \eqref{term2} over all possible $(I_0,\dots,I_q)\vdash[N]$ with $I_0,\dots,I_q\subsetneq[N]$. Given a partition $\{\lambda_1,\dots,\lambda_d\}\vdash[N]$ with $d\geq 2$, note that there are precisely $(q+1)_d=(q+1)\cdot(q)_{d-1}$ ordered partitions $(I_0,\dots,I_q)\vdash[N]$ such that $\{I_{k_1},\dots,I_{k_d}\}=\{\lambda_1,\dots,\lambda_d\}$. Therefore summing \eqref{term2} over all $(I_0,\dots,I_q)\vdash[N]$ with $I_0,\dots,I_q\subsetneq[N]$ gives
\begin{align*}
\sum_{\substack{(I_0,\dots,I_q)\vdash[N]\\I_0,\dots,I_q\subsetneq[N]}}\prod_{k=0}^q\mathcal{Z}_{I_k}(P,\bm{s})&=\frac{1}{q^N}\sum_{\substack{(I_0,\dots,I_q)\vdash[N]\\I_0,\dots,I_q\subsetneq[N]}}\sum_{\substack{\spl\in\mathcal{R}_N\\\ptn_1=\{I_{k_1},\dots,I_{k_d}\}}}\prod_{\lambda\in\mathcal{B}(\spl)\setminus\{[N]\}}\frac{(q-1)_{\deg_\spl(\lambda)-1}}{q^{e_\lambda(\bm{s})}-1}\\
&=\frac{q+1}{q^N}\sum_{\substack{\{\lambda_1,\dots,\lambda_d\}\vdash[N]\\d\geq 2}}(q)_{d-1}\sum_{\substack{\spl\in\mathcal{R}_N\\\ptn_1=\{\lambda_1,\dots,\lambda_d\}}}\prod_{\lambda\in\mathcal{B}(\spl)\setminus\{[N]\}}\frac{(q-1)_{\deg_\spl(\lambda)-1}}{q^{e_\lambda(\bm{s})}-1}~.
\end{align*}
Given a partition $\{\lambda_1,\dots,\lambda_d\}\vdash[N]$, those splitting chains $\spl\in\mathcal{R}_N$ with $\ptn_1=\{\lambda_1,\dots,\lambda_d\}$ all have $\deg_\spl([N])=\#\ptn_1=d$ by \Cref{splchdef}. Moreover, no $\spl\in\mathcal{R}_N$ is missed or repeated in the sum of sums above, so it can be rewritten as
\begin{align*}
\sum_{\substack{(I_0,\dots,I_q)\vdash[N]\\I_0,\dots,I_q\subsetneq[N]}}\prod_{k=0}^q\mathcal{Z}_{I_k}(P,\bm{s})&=\frac{q+1}{q^N}\sum_{\spl\in\mathcal{R}_N}(q)_{\deg_\spl([N])-1}\prod_{\lambda\in\mathcal{B}(\spl)\setminus\{[N]\}}\frac{(q-1)_{\deg_\spl(\lambda)-1}}{q^{e_\lambda(\bm{s})}-1}\\
&=\frac{q+1}{q^N}\sum_{\spl\in\mathcal{R}_N}\frac{(q)_{\deg_\spl([N])-1}}{(q-1)_{\deg_\spl([N])-1}}\cdot(q^{e_{[N]}(\bm{s})}-1)\prod_{\lambda\in\mathcal{B}(\spl)}\frac{(q-1)_{\deg_\spl(\lambda)-1}}{q^{e_\lambda(\bm{s})}-1}\\
&=\frac{q+1}{q^N}\sum_{\spl\in\mathcal{R}_N}\frac{q^{N+\sum_{i<j}s_{ij}}-q}{q+1-\deg_\spl([N])}\prod_{\lambda\in\mathcal{B}(\spl)}\frac{(q-1)_{\deg_\spl(\lambda)-1}}{q^{e_\lambda(\bm{s})}-1}~.
\end{align*}
Note that the summand for each $\spl\in\mathcal{R}_N$ is still defined for any prime power $q$ since the denominators $(q-1)_{\deg_\spl([N])-1}$ and $q+1-\deg_\spl([N])$ (which vanish when $q=\deg_\spl([N])-1$) are cancelled by the numerator $(q-1)_{\deg_\spl([N])-1}$ appearing in the product over $\lambda\in\mathcal{B}(\spl)$. Finally, we evaluate the righthand side of \eqref{sum1} by combining the sum directly above with that in \eqref{term1} and multiplying through by $(\frac{q}{q+1})^N$. This yields the desired formula for $\mathcal{Z}_N(\P^1(K),\bm{s})$: 
\begin{align*}
\mathcal{Z}_N(\P^1(K),\bm{s})&=\frac{1}{(q+1)^{N-1}}\sum_{\spl\in\mathcal{R}_N}\left(1+\frac{q^{N+\sum_{i<j}s_{ij}}-q}{q+1-\deg_\spl([N])}\right)\prod_{\lambda\in\mathcal{B}(\spl)}\frac{(q-1)_{\deg_\spl(\lambda)-1}}{q^{e_\lambda(\bm{s})}-1}\\
&=\frac{1}{(q+1)^{N-1}}\sum_{\spl\in\mathcal{R}_N}\frac{q^{N+\sum_{i<j}s_{ij}}+1-\deg_\spl([N])}{q+1-\deg_\spl([N])}\prod_{\lambda\in\mathcal{B}(\spl)}\frac{(q-1)_{\deg_\spl(\lambda)-1}}{q^{e_\lambda(\bm{s})}-1}
\end{align*}
\end{proof}

\subsection{Finishing the proof of Theorem 2.3}

Our final task is to prove the $(q+1)$th Power Law, which we noted in \Cref{2_2} is equivalent to the equations in \eqref{P(K)coeff}. That is, it remains to prove $$\frac{Z_N(\P^1(K),\beta)}{N!}=\sum_{\substack{N_0+\dots+N_q=N\\N_0,\dots,N_q\geq 0}}\prod_{k=0}^q\left(\frac{q}{q+1}\right)^{N_k}\frac{Z_{N_k}(P,\beta)}{N_k!}\qquad\text{for all $\beta>0$ and $N\geq 0$}.$$

\begin{proof}
Fix $N\geq 0$ and $\beta>0$, and fix $\bm{s}$ via $s_{ij}=\beta$ for all $i<j$, so that $\mathcal{Z}_N(\P^1(K),\bm{s})=Z_N(\P^1(K),\beta)$ and $\mathcal{Z}_I(P,\bm{s})=Z_{\#I}(P,\beta)$ for any subset $I\subset[N]$. The formula in \Cref{main} relates these functions of $\beta$ via 
\begin{align*}
Z_N(\P^1(K),\beta)&=\mathcal{Z}_N(\P^1(K),\bm{s})\\
&=\left(\frac{q}{q+1}\right)^N\sum_{(I_0,\dots,I_q)\vdash[N]}\prod_{k=0}^q\mathcal{Z}_{\#I_k}(P,\bm{s})\\
&=\sum_{(I_0,\dots,I_q)\vdash[N]}\prod_{k=0}^q\left(\frac{q}{q+1}\right)^{\#I_k}Z_{\#I_k}(P,\beta)~.
\end{align*}
For each choice of $q+1$ ordered integers $N_0,\dots,N_q\geq 0$ satisfying $N_0+\dots+N_q=N$, there are precisely $$\binom{N}{N_0,\dots,N_q}=\frac{N!}{N_0!\cdots N_q!}$$ ordered partitions $(I_0,\dots,I_q)\vdash[N]$ satisfying $\#I_0=N_0,\dots,\#I_q=N_q$. Finally, grouping ordered partitions according to all possible ordered integer choices establishes the desired equation:  
\begin{align*}
\frac{Z_N(\P^1(K),\bm{s})}{N!}&=\frac{1}{N!}\cdot\sum_{(I_0,\dots,I_q)\vdash[N]}\prod_{k=0}^q\left(\frac{q}{q+1}\right)^{\#I_k}Z_{\#I_k}(P,\beta)\\
&=\frac{1}{N!}\cdot\sum_{\substack{N_0+\dots+N_q=N\\N_0,\dots,N_q\geq 0}}\binom{N}{N_0,\dots,N_q}\prod_{k=0}^q\left(\frac{q}{q+1}\right)^{N_k}Z_{N_k}(P,\beta)\\
&=\sum_{\substack{N_0+\dots+N_q=N\\N_0,\dots,N_q\geq 0}}\prod_{k=0}^q\left(\frac{q}{q+1}\right)^{N_k}\frac{Z_{N_k}(P,\beta)}{N_k!}~.
\end{align*}
\end{proof} 

\noindent\textbf{Acknowledgements}: I would like to thank Clay Petsche for confirming several details about the measure and metric on $\P^1(K)$, and I would like to thank Chris Sinclair for many useful suggestions regarding the Power Laws and the Quadratic Recurrence.  


\bibliography{references}
\bibliographystyle{amsalpha}

\begin{center}
\noindent\rule{4cm}{.5pt}
\vspace{.25cm}

\noindent {\sc \small Joe Webster}\\
{\small Department of Mathematics, University of Virginia, Charlottesville, VA 22903} \\
email: {\tt nsr6sf@virginia.edu}
\end{center}

\end{document}